
\documentclass[leqno,draft,11pt]{amsart}
\input{amssym.def}


\setlength{\textwidth} {17cm} \setlength{\textheight} {24cm}
\addtolength{\topmargin}{1cm}    

\hoffset = -2cm \voffset =-2.3cm



\newtheorem{coro}[equation]{Corollary}

\newtheorem{propo}[equation]{Proposition}

\theoremstyle{definition} \theoremstyle{remark}

\setlength{\textwidth}{6.25in}      
\setlength{\textheight}{8.25in}     
\numberwithin{equation}{section}

\begin{document}

\title[Area Littlewood-Paley functions in Hermite and Laguerre settings]
{Area Littlewood-Paley functions associated with Hermite and
Laguerre operators}

\subjclass[2000]{42C05 (primary), 42C15 (secondary)}
\keywords{g-functions, Hermite functions, Laguerre functions,
heat-diffusion and Poisson semigroups, vector valued
Calder\'on-Zygmund operators}
\begin{abstract}
In this paper we study $L^{p}$-boundedness properties for area
Littlewood-Paley functions associated with heat semigroups for
Hermite and Laguerre operators.
\end{abstract}

\author[J. Betancor]{Jorge J. Betancor}
\address{Departamento de An\'{a}lisis Matem\'{a}tico\\
Universidad de la Laguna\\
Campus de Anchieta, Avda. Astrof\'{\i}sico Francisco S\'{a}nchez, s/n\\
38271 La Laguna (Sta. Cruz de Tenerife), Espa\~na}
\email{jbetanco@ull.es, lrguez@ull.es}

\author[S.M. Molina]{Sandra M. Molina}
\address{Departamento de Matem\'{a}ticas. Facultad de Ciencias Exactas y Naturales\\Universidad Nacional de Mar del Plata\\
Funes 3350 (7600)\\
Mar del Plata, Argentina} \email{smolina@mdp.edu.ar}

\author[L. Rodr\'{\i}guez-Mesa]{Lourdes Rodr\'{\i}guez-Mesa}

\thanks{This paper is partially supported by MTM2007/65609.}

\maketitle

\section{Introduction.}
We denote by $\mathbb{T}=\{T_{t}\}_{t>0}$ a semigroup of linear
and bounded operators on $L^{p}(\Omega,d\mu)$, $1\leq p<\infty$,
where $(\Omega,d\mu)$ is a measure space. Suppose that $\rho$ is a
metric on $\Omega$ and that, for every $f\in L^{p}(\Omega,d\mu)$,
the mapping
$$
\begin{array}{rll}
M_{f}:(0,\infty) & \rightarrow & L^{p}(\Omega,d\mu)\\
\:\:\:\:t & \rightarrow & M_{f}(t)=T_{t}(f),\\
\end{array}
$$
is a.e pointwise differentiable. For every $q>1$, the area g-function
$g_{\mathbb{T}}^q(f)$ of $f\in L^{p}(\Omega,d\mu)$, $1\leq
p<\infty$, is defined by
$$
g_{\mathbb{T}}^q(f)(x)=\Biggl\{\int_{\Gamma(x)}\biggl|\left(s\frac{\partial}{\partial
s}T_{s}(f)(y)\right)_{|s=t^{2}}\biggr|^q\:
\frac{dtdy}{t^{2}}\Biggr\}^{1/q},
$$
where $\Gamma(x)=\{(y,t)\in\Omega\times (0,\infty) : \rho(x,y)<
t\}$, $x\in\Omega$. This area g-function can be seen as an
extension of the Lusin area integral function. As it is wellknown
Lusin area integral is related to the nontangential boundary
behaviour of analytic and harmonic functions in the unit disc
(see, for instance, the celebrated papers \cite{Cal}, \cite{MZ}
 and \cite{Spen}). $L^{p}$-boundedness properties of the (sublinear)
operator $g_{\mathbb{T}}^q$ (and some extensions of this one) when
$\mathbb{T}$ represents Poisson or heat semigroups associated with
the Euclidean Laplacian operator and in other settings have been
studied by several authors (\cite{AtPi}, \cite{BaMoo},
\cite{BuGun}, \cite{Dahl}, \cite{DKPV}, \cite{Gas},
\cite{GuWh}, \cite{HMY}, \cite{KoVa}, \cite{MuUchi}, \cite{Seg},
\cite{Stein1}, amongst others). In this paper we prove
$L^{p}$-boundedness properties for the function $g_{\mathbb{T}}^q$, $q>1$,
when $\mathbb{T}$ is the heat semigroup associated with Hermite
and Laguerre operators.

For every $n\in \mathbb{N}$, we denote by $h_{n}$ the Hermite
function defined by
$$
h_{n}(x)=\frac{1}{\sqrt{\sqrt{\pi}2^{n}n!}}H_{n}(x)e^{-x^{2}/2},\quad
x\in\mathbb{R},
$$
where $H_{n}$ represents the Hermite polynomial of degree $n$. The
sequence $\{h_{n}\}_{n\in\mathbb{N}}$ is complete and orthonormal
in $L^{2}(\mathbb{R})$. Moreover, one has
$$\mathcal{H}h_{n}=\Big(n+\frac{1}{2}\Big)h_{n},\quad n\in\mathbb{N},$$
where $\mathcal{H}=\frac{1}{2}(-\Delta +|x|^{2})$ is the harmonic
oscillator, also called, Hermite operator. This operator $\mathcal{H}$ is
positive and symmetric in $L^{2}(\mathbb{R})$ on the domain
$C_{c}^{\infty}(\mathbb{R})$, the space of the
$C^{\infty}$-functions on $\mathbb{R}$ which have compact support.

The heat diffusion semigroup $\mathbb{W}=\{W_{t}\}_{t>0}$ generated by $L$
is given by
$$ W_{t}f=\sum_{n=0}^{\infty}c_{n}(f) e^{-(n+1/2)t}h_{n}, \quad  f\in
L^{2}(\mathbb{R}),$$ being, for every $n\in\mathbb{N}$,
$$
c_{n}(f)=\int_{-\infty}^{+\infty}h_{n}(x) f(x)\: dx.
$$
By using the Mehler formula (\cite[p. 380]{Sz}) we can write, for
$f\in L^2(\mathbb{R})$,
$$
W_{t}f(x)=\int_{-\infty}^{+\infty} W_{t}(x,y) f(y)\:dy, \quad
x\in \mathbb{R},
$$
where, for each $x,y\in\mathbb{R}$ and $t>0$,
\begin{equation}\label{kernelW}
 W_{t}(x,y)= \sum_{n=0}^{\infty}e^{-(n+1/2)t}h_{n}(x) h_{n}(y)
 =\frac{1}{\sqrt{\pi}}\Bigl(\frac{e^{-t}}{1-e^{-2t}}\Bigr)^{1/2}
e^{-\frac{(x-e^{-t}y)^2+(y-e^{-t}x)^2}{2(1-e^{-2t})}}.
\end{equation}

 The study of harmonic analysis operators (such as maximal operators,
 Riesz transforms, g-functions,...) in the Hermite polynomial
 setting was begun by Muckenhoupt (\cite{Mu1} and \cite{Mu5}). We can remark
 also in this context the results of Sj\"ogren (\cite{Sj2} and
 \cite{Sj1}), P\'erez and Soria (\cite{PS}), P\'erez (\cite{Pe}), Garc\'ia-Cuerva,
 Mauceri,  Sj\"ogren and Torrea (\cite{GMST1}, \cite{GMST2}), Garc\'ia-Cuerva,
 Mauceri,  Meda, Sj\"ogren and Torrea (\cite{GMMST}),
 Fabes, Guti\'errez and Scotto (\cite{FGS}), Urbina (\cite{Ur}) and Harboure,
 Torrea and Viviani (\cite{HTV1}), amongst others.
 Harmonic analysis associated with the Hermite operator $\mathcal{H}$ has
 been developed in the last years. Stempak and Torrea studied
 maximal operators associated with $\mathbb{W}=\{W_{t}\}_{t>0}$, Riesz
 transforms and certain Littlewood-Paley
 g-functions in \cite{StTo1}, \cite{StTo2} and \cite{StTo3}. Here we consider, for every $q>1$,  the area g-function $g_\mathbb{W}^q$ associated with the heat semigroup
 $\mathbb{W}=\{W_{t}\}_{t>0}$, defined by
 $$
 g_{\mathbb{W}}^q(f)(x)=\Biggl \{\int_{\Gamma(x)}\Bigl|\left(s\frac{\partial}{\partial s}W_{s}(f)(y)\right)
 _{|s=t^{2}}\Bigr|^q\:\frac{dtdy}{t^{2}}\Biggr\}^{1/q},\quad x\in \mathbb{R} ,
 $$
where $\Gamma(x)=\{(y,t)\in \mathbb{R}\times (0,\infty):
|x-y|<t\}$, for every $x\in\mathbb{R}$. We establish the following
$L^{p}$-boundedness properties for this g-function.

\begin{propo}\label{Hermite} Let $q\geq 2$. Then $g_{\mathbb{W}}^q$ defines a
bounded operator from $L^{p}(\mathbb{R})$ into itself, for
every $1<p<\infty$, and from $L^{1}(\mathbb{R})$ into
$L^{1,\infty}(\mathbb{R})$.
\end{propo}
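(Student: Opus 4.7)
My plan is to realize $g_{\mathbb{W}}^q$ as a Banach-space-valued Calder\'on-Zygmund operator and then appeal to vector-valued Calder\'on-Zygmund theory. Set $\mathbb{B}_q = L^q((0,\infty)\times\mathbb{R},\, dt\,dy/t^2)$ and define
$$
T(f)(x)(y,t) = \chi_{\{|x-y|<t\}}(y,t)\, \left(s\frac{\partial}{\partial s}W_s(f)(y)\right)_{|s=t^2},
$$
so that $\|T(f)(x)\|_{\mathbb{B}_q} = g_{\mathbb{W}}^q(f)(x)$. The integral kernel of $T$, viewed as a $\mathbb{B}_q$-valued function of $(x,u)\in\mathbb{R}\times\mathbb{R}$, is
$$
\mathcal{K}(x,u)(y,t) = \chi_{\{|x-y|<t\}}(y,t)\,\left(s\frac{\partial}{\partial s}W_s(y,u)\right)_{|s=t^2}.
$$
Once $L^{p_0}$-boundedness of $T$ is established for some $p_0\in(1,\infty)$, together with the standard H\"ormander-type conditions for $\mathcal{K}$ in the $\mathbb{B}_q$-norm, the vector-valued Calder\'on-Zygmund theorem produces the $L^p$- and weak-type bounds stated in the proposition.

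For the initial $L^{p_0}$-bound I take $p_0=q$. By Fubini's theorem and the substitution $u=t^2$,
$$
\|g_{\mathbb{W}}^q(f)\|_q^q = \int_\mathbb{R}\int_0^\infty \big|u\,\partial_u W_u(f)(y)\big|^q\,\frac{du}{u}\,dy,
$$
reducing $L^q$-boundedness of the area $g^q$-function to $L^q$-boundedness of the (non-area) Littlewood--Paley--Stein $g^q$-function associated with $\mathbb{W}$. For $q=2$ the latter is an immediate consequence of the spectral theorem applied to the Hermite expansion; for general $q\ge 2$ it can be obtained by adapting the vector-valued Calder\'on-Zygmund techniques of Stempak--Torrea \cite{StTo2}, together with the local-global kernel estimates described below.

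The core of the proof is verifying the H\"ormander conditions on $\mathcal{K}$ in the $\mathbb{B}_q$-norm. For this I would decompose the Hermite heat kernel as $W_s(y,u) = W_s^{\mathrm{loc}}(y,u) + W_s^{\mathrm{glob}}(y,u)$, where $W_s^{\mathrm{loc}}$ is supported in a diagonal neighborhood of the form $\{|y-u|\le c/(1+|y|)\}$. On the local region, the Mehler formula exhibits $\partial_s W_s(y,u)$ as a controlled perturbation of the $s$-derivative of the Gauss--Weierstrass kernel $(2\pi s)^{-1/2}e^{-(y-u)^2/(2s)}$, so the local part of $\mathcal{K}$ inherits the H\"ormander estimates from the classical Euclidean area $g^q$-theory plus a Hermite-specific remainder of lower order. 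On the global region, the Gaussian factors in the Mehler formula combined with the factor $(e^{-s}/(1-e^{-2s}))^{1/2}$ produce off-diagonal decay of Gaussian type uniformly in $s$, so the global part of $\mathcal{K}$ is dominated in $\mathbb{B}_q$-norm by a classical positive integrable kernel.

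The main obstacle is the H\"ormander estimate for $\mathcal{K}^{\mathrm{loc}}$ in the first variable. Writing $\mathcal{K}(x,u)-\mathcal{K}(x',u)$ produces a boundary contribution from the difference of cone indicators $\chi_{\{|x-y|<t\}}-\chi_{\{|x'-y|<t\}}$, which is supported on a shell of width $|x-x'|$ about the boundary of $\Gamma(x)$. The integrated $\mathbb{B}_q$-norm of this contribution is controlled, via the cone geometry, precisely in the range $q\ge 2$; this is where the hypothesis on $q$ in the statement enters the argument. Once both H\"ormander estimates are verified, the vector-valued Calder\'on-Zygmund theorem yields $T:L^p(\mathbb{R})\to L^p(\mathbb{R},\mathbb{B}_q)$ for $1<p<\infty$ and $T:L^1(\mathbb{R})\to L^{1,\infty}(\mathbb{R},\mathbb{B}_q)$, which is equivalent to the conclusion of the proposition.
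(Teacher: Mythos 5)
Your overall strategy (vector-valued Calder\'on--Zygmund theory, with an initial $L^{q}$ bound obtained by Fubini and the substitution $u=t^{2}$) is the same as the paper's, but two points in your sketch do not hold up. First, the initial $L^{p_0}$ bound with $p_0=q$ is the place where the hypothesis $q\geq 2$ is genuinely used, and you leave it essentially unproved: for $q>2$ you say the $L^q$-boundedness of the non-area function $g_q(f)(x)=\{\int_0^\infty|t\partial_tW_t(f)(x)|^q\,dt/t\}^{1/q}$ ``can be obtained by adapting the vector-valued Calder\'on--Zygmund techniques of Stempak--Torrea,'' but a Calder\'on--Zygmund argument itself needs an initial strong-type bound on some $L^{p_0}$, which for $g_q$ with $q\neq 2$ is exactly what is missing; the reduction is circular as stated. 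The paper's argument here is a concrete one-line idea you need: write $|t\partial_tW_t(f)(x)|^{q}\leq |t\partial_tW_t(f)(x)|^{2}\bigl(\sup_{t>0}|t\partial_tW_t(f)(x)|\bigr)^{q-2}$ (this is where $q\geq2$ enters), dominate $\sup_{t>0}|t\partial_tW_t(f)(x)|$ pointwise by the classical heat maximal operator $\mathcal{W}_*(|f|)(x)$ using the Gaussian bound on $\partial_sW_s(x,y)$, and apply H\"older together with the $L^q$-boundedness of $g_2$ (Stempak--Torrea) and of $\mathcal{W}_*$. Second, your assertion that the restriction $q\geq2$ comes from the boundary-shell estimate for the difference of cone indicators is incorrect: with the size bound $|(s\partial_sW_s(y,u))_{|s=t^2}|^{q}t^{-2}\leq C(t+|y-u|)^{-q-2}$, the shell of width $|x-x'|$ at $y\approx x\pm t$ contributes $O\bigl(|x-x'|^{1/q}|x-u|^{-1-1/q}\bigr)$, whose integral over $|x-u|>2|x-x'|$ is bounded for every $q>1$. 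So the cone geometry is not where the hypothesis is consumed, and attributing it there leaves the actual use of $q\geq2$ unaccounted for.

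A further, more structural remark: the paper avoids your indicator-in-the-kernel formulation altogether by translating the cone to the origin, i.e.\ taking the Banach space $L^{q}(\Gamma(0),dt\,dy/t^{2})$ and the kernel $K(x,z)(y,t)=\bigl(s\partial_sW_s(x+y,z)\bigr)_{|s=t^{2}}$. With this choice $g_{\mathbb W}^q(f)(x)=\|Tf(x)\|_{L^q(\Gamma(0),dt\,dy/t^2)}$, the kernel is genuinely differentiable in both $x$ and $z$, and the standard estimates $\|K(x,z)\|\leq C|x-z|^{-1}$ and $\|\partial_xK(x,z)\|+\|\partial_zK(x,z)\|\leq C|x-z|^{-2}$ follow directly from the Mehler formula, with no boundary terms and no local/global decomposition of the heat kernel. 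Your formulation can be made to work, but the translation trick removes the most delicate step of your outline; I would recommend adopting it, and in any case you must supply the interpolation argument for the $L^q$ bound of $g_q$.
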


We remark that \cite[Theorem 4.8]{HVP} can not be used to establish the $L^p$-boundedness of the operator defined by $g_{\mathbb{W}}^2$ because
$$
\int_{\mathbb R}\frac{\partial}{\partial s}W_s(x,y)_{|s=t^2}dy\not =0,\quad x\in\mathbb{R}\mbox{ and }t>0,
$$
(see \cite[Proposition 5.1]{StTo4}).

The Laguerre differential operator $L_\alpha$, $\alpha >-1/2$, can
be written by
$$L_{\alpha}=\frac{1}{2}\Bigl(-\frac{d^{2}}{dx^{2}}+x^{2}+\frac{1}{x^{2}}
\Big(\alpha^{2}-\frac{1}{4}\Big)\Bigr).
$$

\noindent This operator is positive and symmetric in the domain
$C_{c}^{\infty}(0,\infty)$ with respect to $L^{2}(0,\infty)$.
Here $C_{c}^{\infty}(0,\infty)$ denotes the space of
$C^{\infty}$-functions that have compact support on $(0,\infty)$.
For every $n\in\mathbb{N}$, one has
$$
L_{\alpha}\varphi_{n}^{\alpha}=(2n+\alpha+1)\varphi_{n}^{\alpha},
$$
where
$$
\varphi_{n}^{\alpha}(x)=\Biggl(\frac{2\Gamma(n+1)}{\Gamma(n+1+\alpha)}\Biggr)^{1/2}e^{-x^2/2}
\:x^{\alpha+\frac{1}{2}}\:L_{n}^{\alpha}(x^{2}), \quad\:x\in
(0,\infty),$$ and $L_{n}^{\alpha}$ is the Laguerre polinomial of
type $\alpha$  (\cite[p. 100]{Sz} and \cite[p. 7]{Th2}). The heat
diffusion semigroup $\mathbb{W}^\alpha =\{W^{\alpha}_{t}\}_{t>0}$ generated
by $L_{\alpha}$ is defined by
$$
W^{\alpha}_{t}(f)=\sum_{n=0}^{\infty}\:c^{\alpha}_{n}(f)\:e^{-(2n+\alpha+1)t}\:\varphi_{n}^{\alpha},
\quad f\in L^{2}(0,\infty),
$$
being
$$
c_{n}^{\alpha}(f)=\int_{0}^{\infty}\varphi_{n}^{\alpha}(x)f(x)\:dx,\quad
n\in\mathbb{N}.
$$

According to the Mehler formula for Laguerre polynomials
(\cite[(1.1.47)]{Th2}) we can write, for $f\in L^2(0,\infty )$,
$$
W^{\alpha}_{t}(f)(x)=\int_{0}^{\infty}W^{\alpha}_{t}(x,y)f(y)\:dy,\quad x\in (0,\infty),
$$
where, for each $x,y,t\in (0,\infty)$,
$$
W^{\alpha}_{t}(x,y)=\Biggl(\frac{2e^{-t}}{1-e^{-2t}}\Biggr)^{1/2}\Biggl(\frac{2xye^{-t}}{1-e^{-2t}}\Biggr)^{1/2}
I_{\alpha}\Biggl(\frac{2xye^{-t}}{1-e^{-2t}}\Biggr)e^{-\frac{1}{2}(x^{2}+y^{2})\frac{1+e^{-2t}}{1-e^{-2t}}}.
$$
Here $I_{\alpha}$ denotes the modified Bessel function of the
first kind and order $\alpha$.

Muckenhoupt (\cite{Mu2}) investigated harmonic analysis associated
with Laguerre polynomials $\{L_{n}^{\alpha}\}_{n\in \mathbb{N}}$.
More recently, harmonic analysis operators in the
$L_{\alpha}$-setting have been studied by several authors. Stempak
\cite{St8}, Mac\'ias, Segovia and Torrea \cite{MST2} and Chicco
Ruiz and Harboure \cite{CH} studied maximal operator for the heat
semigroup $\mathbb{W}^\alpha =\{W^{\alpha}_{t}\}_{t>0}$. Riesz transforms
associated with Laguerre functions was investigated by Harboure,
Torrea and Viviani (\cite{HTV2}) and Harboure, Segovia, Torrea and
Viviani (\cite{HSTV}). Also, the papers of Nowak (\cite{No1} and
\cite{No2}) and Nowak and Stempak (\cite{NS1} and \cite{NS2}) are
remarkable. In \cite{GIT}, Guti\'errez, Incognito and Torrea
investigated Riesz transforms and certain Littlewood-Paley
functions in the Laguerre context by exploiting a relation between
n-dimensional Hermite polynomials and Laguerre polynomials when
$\alpha=\frac{n-1}{2}$. This idea was also used in \cite{GLLNU} to
study higher order Riesz transform associated with Laguerre
polynomials. In \cite{BFRST1}, Betancor, Fari\~{n}a, Rodr\'{\i}guez-Mesa,
Sanabria and Torrea developed a new procedure to analyze operators
in the Laguerre setting. The operator under consideration is
decomposed into a local part and into a global part. The
boundedness pro\-per\-ties of the local operator are deduced from the
boundedness properties of the corresponding operator in the
Hermite setting. This transference procedure works for every value
of $\alpha$ and it uses properties of operators in the one
dimension Hermite context in contrast with the method employed in
\cite{GLLNU} and \cite{GIT}. In this paper we apply the procedure
introduced in \cite{BFRST1} to es\-ta\-blish $L^{p}$-boundedness
properties for the area g-functions $g_{\mathbb{W}^\alpha}^q$, $q>1$, associated
with the heat Laguerre semigroup
$\mathbb{W}^\alpha=\{W_{t}^{\alpha}\}_{t>0}$, and defined by
$$
g_{\mathbb{W}^\alpha}^q(f)(x)=\Biggl\{\int_{\Gamma_{+}(x)}\biggl|\left(s\frac{\partial}{\partial
s} W_{s}^{\alpha}(f)(y)\right)_{|s=t^{2}}\biggr|^q \frac{dt\:dy}{t^{2}}
\Biggr\}^{1/q},\quad x\in (0,\infty ),
$$
where $\Gamma_{+}(x)=\{(y,t)\in
(0,\infty)\times(0,\infty):|y-x|<t\}$, for every $x\in
(0,\infty)$. We transfer the result obtained in Proposition
\ref{Hermite} from $g_{\mathbb{W}}^q$ to $g_{\mathbb{W}^\alpha}^q$ and we get the
following.
\begin{propo}\label{Laguerre}
Let $\alpha >-1/2$ and $q\geq 2$. Then $g_{\mathbb{W}^\alpha}^q$ defines a
bounded operator from $L^{p}(0,\infty)$ into itself, for
every $1<p<\infty$, and from $L^{1}(0,\infty)$ into
$L^{1,\infty}(0,\infty)$.
\end{propo}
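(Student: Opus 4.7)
The plan is to transfer Proposition \ref{Hermite} to the Laguerre setting via the local/global splitting introduced in \cite{BFRST1}. I would fix a local region of the form
$$
N = \{(x,y) \in (0,\infty)^2 : |x-y| < c\min(x,1)\}
$$
for a suitable $c \in (0,1)$, and decompose
$$
g_{\mathbb{W}^\alpha}^q(f)(x) = g_{\mathrm{loc}}^q(f)(x) + g_{\mathrm{glob}}^q(f)(x),
$$
where in $g_{\mathrm{loc}}^q$ the inner argument $(y,t) \in \Gamma_+(x)$ is restricted so that $(x,y)\in N$ (and, say, $t<1$), while $g_{\mathrm{glob}}^q$ collects the complementary part. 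The two pieces are handled by genuinely different arguments.

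For the \emph{global} part I would carry out direct kernel estimates. Using the standard asymptotic behavior of $I_\alpha$ (namely $I_\alpha(z) \sim z^\alpha/(2^\alpha \Gamma(\alpha+1))$ as $z\to 0^+$ and $I_\alpha(z) = e^z/\sqrt{2\pi z}(1 + O(1/z))$ as $z\to\infty$), together with the Mehler formula for $W_s^\alpha$, one bounds $|(s \partial_s W_s^\alpha(y,z))_{s=t^2}|$ pointwise for $(x,y)\notin N$ (or $t\geq 1$) by a kernel whose $L^q(\Gamma_+(x), dt\,dy/t^2)$-norm, applied to $|f|(z)$ and integrated in $z$, is majorized by a Hardy-type operator on $|f|$. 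This yields the $L^p(0,\infty)$ and weak type $(1,1)$ bounds for $g_{\mathrm{glob}}^q$ in a self-contained way.

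For the \emph{local} part I would exploit that, when $(y,z)\in N$ and $t<1$, the argument of the Bessel function in the expression for $W_{t^2}^\alpha(y,z)$ is large, so its full asymptotic expansion applies and yields
$$
W_{t^2}^\alpha(y,z) = W_{t^2}(y,z) + R_{t^2}(y,z),
$$
with $W_{t^2}$ the Hermite kernel from \eqref{kernelW} and a remainder $R_s$ satisfying pointwise estimates for $R_s$, $\partial_y R_s$ and $\partial_z R_s$ that translate, after the cone integration, into size and smoothness bounds for the associated kernel with values in $L^q((0,\infty)\times(0,\infty), dt\,dy/t^2)$. The $W_{t^2}$-contribution to $g_{\mathrm{loc}}^q$ is controlled, after extending functions by zero, by a localized version of $g_{\mathbb{W}}^q$ on $\mathbb{R}$, whose boundedness is provided by Proposition \ref{Hermite}; the remainder defines a vector-valued Calder\'on--Zygmund operator on $(0,\infty)$, which is bounded on $L^p$, $1<p<\infty$, and of weak type $(1,1)$.

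The principal obstacle is the verification of the size and smoothness estimates for the remainder $R_s$ with values in the appropriate $L^q$-space, uniformly in $s$, together with the corresponding $s$-integrability estimates for the global kernel. The hypothesis $q \geq 2$ makes Minkowski's inequality available in the convenient direction for extracting the vector norm through the various $s$- and $y$-integrations, which is what permits the reduction of the vector-valued estimates to scalar pointwise bounds.
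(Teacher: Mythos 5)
Your overall strategy (transference from Proposition \ref{Hermite} via a local/global splitting in the spirit of \cite{BFRST1}) is the one the paper follows, and your treatment of the global part by Hardy-type operators is essentially what is done there. But there is a genuine gap in your local part. You claim that for $(y,z)$ in the local region and $t<1$ the argument $\frac{2yze^{-t^2}}{1-e^{-2t^2}}$ of the Bessel function is large, so that the full asymptotic expansion applies and $W^\alpha_{t^2}(y,z)=W_{t^2}(y,z)+R_{t^2}(y,z)$ with a small remainder. This is false: the size of the Bessel argument is governed by the comparison between $t^2$ and $yz$, not by the closeness of $y$ to $z$. On the cone $\Gamma_+(x)$ the variable $t$ ranges over all of $(|x-y|,\infty)$, and even with the restriction $t<1$ one has, for small $x$ (say $x=y=z$ small and $t$ of order $1$), $\frac{2yze^{-t^2}}{1-e^{-2t^2}}\approx yz/t^2\ll 1$. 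In that regime $W_s^\alpha(y,z)\sim C(yz)^{\alpha+1/2}(\cdots)$, which for $\alpha>-1/2$ is \emph{not} comparable to the Hermite kernel $W_s(y,z)$ (it vanishes as $yz\to 0$ while $W_s(y,z)$ does not), so your ``remainder'' is as large as the main term and the proposed comparison collapses. The correct dichotomy, which the paper uses, is to split the cone itself into the sets $L(z)$ and $R(z)$ where $\frac{yze^{-t^2}}{1-e^{-2t^2}}\leq 1$ or $\geq 1$; on $L(z)$ one must estimate $W_s^\alpha$ and $W_s$ \emph{separately} (the terms $N_1$ and $G_2$ in the paper), and only on $R(z)$ does the large-argument expansion produce a genuine difference estimate ($G_3$).

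Two further points. First, the resulting difference kernel on the local region $\frac{x}{2}<z<2x$ is dominated by $\frac{C}{z}\bigl(1+\sqrt{z/|x-z|}\bigr)$, which has an integrable (non-CZ) singularity at $z=x$; the paper concludes by a Schur/Jensen argument giving boundedness on $L^p$ for all $1\leq p\leq\infty$, which is both simpler and stronger than trying to verify H\"ormander-type smoothness for a vector-valued Calder\'on--Zygmund kernel as you propose -- and it is not clear those smoothness bounds would even hold for your remainder across the small-argument regime. Second, the paper compares $g^q_{\mathbb{W}^\alpha}$ not with $g^q_{\mathbb{W}}$ of the zero extension but with $g^{q,+}_{\mathbb{W}}$ built from the \emph{odd} extension $f_{\rm o}$, which is what makes the Hermite result directly applicable on the half-line; this introduces the reflected kernel $W_s(y,-z)$, an extra term ($A_2$) that your decomposition omits and that must also be shown to be of Hardy type. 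None of these are fatal to the strategy, but as written the local-part argument does not go through.
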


We complete this result with the next reverse $L^p$-boundedness property for $g_{\mathbb{W}^\alpha}^q$, $1<q\leq2$.
\begin{propo}\label{reverse}
Let $\alpha >-1/2$ and $1<q\leq2$. For every $1<p<\infty$, there exists $C>0$ for which
$$
||f||_{L^p(0,\infty )}\leq C||g_{\mathbb{W}^\alpha }^q(f)||_{L^p(0,\infty )},\quad f\in L^p(0,\infty ).
$$
\end{propo}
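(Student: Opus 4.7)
The strategy is duality combined with a bilinear polarization identity. Set $\Psi_f(y,t) = (s\,\partial_s W^\alpha_s f(y))_{s=t^2}$, so that $g^q_{\mathbb{W}^\alpha}(f)(x)$ is the $L^q(\Gamma_+(x), dy\,dt/t^2)$-norm of $\Psi_f$. The first step is to verify, by spectral decomposition in the Laguerre basis $\{\varphi_n^\alpha\}$, the polarization identity
$$
\langle f,h\rangle_{L^2(0,\infty)} \;=\; 8\int_0^\infty\!\!\int_0^\infty \Psi_f(y,t)\,\Psi_h(y,t)\,\frac{dy\,dt}{t},
$$
for $f,h$ in the (dense) linear span of $\{\varphi_n^\alpha\}$. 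Indeed $\Psi_{\varphi_n^\alpha}(y,t) = -t^2\lambda_n e^{-\lambda_n t^2}\varphi_n^\alpha(y)$ with $\lambda_n = 2n+\alpha+1$, orthonormality collapses the $y$-integration to the diagonal $n=m$, and the elementary computation $\int_0^\infty t^3\lambda_n^2 e^{-2\lambda_n t^2}\,dt = 1/8$ delivers a constant independent of $n$.

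Next I would bound the right-hand side of this identity by the area $g$-functions. The key Fubini observation is that $|\{x\in(0,\infty):|x-y|<t\}|\in[t,2t]$ for all $y,t>0$, so for non-negative $F$,
$$
\int_0^\infty\!\!\int_0^\infty F(y,t)\,\frac{dy\,dt}{t} \;\leq\; \int_0^\infty\!\!\int_{\Gamma_+(x)} F(y,t)\,\frac{dy\,dt}{t^2}\,dx \;\leq\; 2\int_0^\infty\!\!\int_0^\infty F(y,t)\,\frac{dy\,dt}{t}.
$$
Setting $q' = q/(q-1) \geq 2$, pointwise Hölder in the cone yields $\int_{\Gamma_+(x)}|\Psi_f\Psi_h|\,dy\,dt/t^2 \leq g^q_{\mathbb{W}^\alpha}(f)(x)\,g^{q'}_{\mathbb{W}^\alpha}(h)(x)$, while Hölder in $x$ with exponents $p,p'$ then gives
$$
\left|\int_0^\infty\!\!\int_0^\infty \Psi_f\,\Psi_h\,\frac{dy\,dt}{t}\right| \;\leq\; \|g^q_{\mathbb{W}^\alpha}(f)\|_{L^p(0,\infty)}\,\|g^{q'}_{\mathbb{W}^\alpha}(h)\|_{L^{p'}(0,\infty)}.
$$
Since $q' \geq 2$, Proposition~\ref{Laguerre} supplies $\|g^{q'}_{\mathbb{W}^\alpha}(h)\|_{p'} \leq C\|h\|_{p'}$, and the polarization identity produces $|\langle f,h\rangle|\leq 8C\,\|g^q_{\mathbb{W}^\alpha}(f)\|_p\,\|h\|_{p'}$. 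Taking the supremum over $h$ in the Laguerre span with $\|h\|_{p'}\leq 1$ (dense in $L^{p'}(0,\infty)$) yields the reverse inequality for $f$ in the same dense class.

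The main technical obstacle lies in passing to arbitrary $f\in L^p(0,\infty)$, where one no longer has access to Proposition~\ref{Laguerre} on the $f$-side to control $g^q(f)$. I would instead extend the polarization identity to such $f$ (paired with $h$ in the Laguerre span) by invoking $L^p$-continuity of the semigroup $W^\alpha_s$ and dominated convergence, the latter justified by the a priori bound of the second step. Apart from this density issue, the remaining ingredients are elementary Fubini/Hölder manipulations; the real content of the argument is the forward $L^{p'}$-bound of Proposition~\ref{Laguerre} for $g^{q'}_{\mathbb{W}^\alpha}$ in the regime $q'\geq 2$, fed through duality to yield the reverse bound for $g^q_{\mathbb{W}^\alpha}$ in the complementary regime $1<q\leq 2$.
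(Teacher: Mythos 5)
Your proposal follows essentially the same route as the paper: the same polarization identity with the same constant $8$ (the paper states it with the weight $\frac{dt\,dy}{t|J_t(y)|}$ integrated also in $x$, which after Fubini in $x$ collapses to exactly your $\frac{dy\,dt}{t}$), verified on the Laguerre span by the same diagonal computation $\lambda_n^2\int_0^\infty t^3e^{-2\lambda_nt^2}\,dt=1/8$, followed by duality, H\"older with exponents $(q,q')$ in the cone and $(p,p')$ in $x$, and Proposition \ref{Laguerre} applied to $g^{q'}_{\mathbb{W}^\alpha}$ since $q'\geq 2$. All of that is correct.

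The one step that needs repair is your extension of the polarization identity beyond the Laguerre span. Dominated convergence ``justified by the a priori bound of the second step'' does not close: that bound controls the bilinear form by $\|g^q_{\mathbb{W}^\alpha}(f)\|_p\,\|g^{q'}_{\mathbb{W}^\alpha}(h)\|_{p'}$, and for $1<q<2$ you have no estimate $\|g^q_{\mathbb{W}^\alpha}(f_n-f)\|_p\lesssim\|f_n-f\|_p$ (Proposition \ref{Laguerre} is only available for exponents $\geq 2$), so you cannot produce a dominating function or pass to the limit in $f$ along an approximating sequence from the span. The same obstruction blocks your fallback of proving the reverse inequality on the dense class and then extending, since $g^q_{\mathbb{W}^\alpha}$ is only sublinear and not a priori $L^p$-bounded. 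The fix, which is what the paper does, is to run your H\"older estimate with exponent $2$ on \emph{both} factors: since $g^2_{\mathbb{W}^\alpha}$ is bounded on $L^p$ and on $L^{p'}$ by Proposition \ref{Laguerre}, the bilinear form $T_2(f,h)$ is continuous on $L^p(0,\infty)\times L^{p'}(0,\infty)$, hence agrees with $\int_0^\infty fh$ everywhere because it does so on a dense subspace of each factor. With the identity available for arbitrary $f\in L^p$ and $h\in L^{p'}$, your duality argument then applies directly to general $f$, with the convention that the conclusion is trivial when $\|g^q_{\mathbb{W}^\alpha}(f)\|_p=\infty$.
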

\noindent By combining Propositions \ref{Laguerre} and \ref{reverse} we can obtain the following.
\begin{coro}
Let $\alpha >-1/2$ and $1<p<\infty $. Then, there exists $C>0$ such that
$$
\frac{1}{C}||f||_{L^p(0,\infty )}\leq ||g_{\mathbb{W}^\alpha }^2(f)||_{L^p(0,\infty )}\leq C||f||_{L^p(0,\infty )},\quad f\in L^p(0,\infty ).
$$
\end{coro}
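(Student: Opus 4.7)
The statement is an immediate corollary of the two preceding propositions, so my plan is simply to specialize both to the exponent $q=2$, which is the unique value lying in the intersection of their admissible ranges. Proposition \ref{Laguerre} is stated for $q\geq 2$ and yields, in particular for $q=2$, the upper bound
$$
\|g_{\mathbb{W}^\alpha}^2(f)\|_{L^p(0,\infty)}\leq C\|f\|_{L^p(0,\infty)}, \quad f\in L^p(0,\infty),
$$
for every $1<p<\infty$ (here I am using only the strong-type part; the weak-type $(1,1)$ statement plays no role for the corollary).

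For the reverse inequality I invoke Proposition \ref{reverse}, which covers the range $1<q\leq 2$. Taking $q=2$ again gives
$$
\|f\|_{L^p(0,\infty)}\leq C\|g_{\mathbb{W}^\alpha}^2(f)\|_{L^p(0,\infty)}, \quad f\in L^p(0,\infty),
$$
for every $1<p<\infty$. Combining the two inequalities (and relabeling the constants by a single $C$ taken as the maximum) yields the equivalence
$$
\frac{1}{C}\|f\|_{L^p(0,\infty)}\leq \|g_{\mathbb{W}^\alpha}^2(f)\|_{L^p(0,\infty)}\leq C\|f\|_{L^p(0,\infty)},\quad f\in L^p(0,\infty),
$$
as claimed. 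There is no genuine obstacle here since all of the analytic work is already encapsulated in Propositions \ref{Laguerre} and \ref{reverse}; the only mild point worth remarking is that $q=2$ is simultaneously the endpoint of the range $q\geq 2$ in Proposition \ref{Laguerre} and of the range $1<q\leq 2$ in Proposition \ref{reverse}, so that the two-sided bound is available precisely for this value of the area parameter.
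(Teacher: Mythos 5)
Your proposal is correct and follows exactly the paper's route: the corollary is stated there as an immediate consequence of combining Proposition \ref{Laguerre} and Proposition \ref{reverse} at the common admissible value $q=2$, which is precisely what you do.
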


Hardy spaces associated with Schr\"odinger operators were studied by Dziuba\'nski and Zien\-kie\-wicz (\cite{DZ1}, \cite{DZ2} and \cite{DZ3}). The Hermite operator is a special case of the operators considered by these authors. The Hardy space $H^1_{\mathcal H}(\mathbb{R})$ in the Hermite setting consists of all those functions $f\in L^1(\mathbb{R})$ such that $\sup _{t>0}|W_t(f)|\in L^1(\mathbb{R})$. The norm $||\cdot ||_{H^1_{\mathcal H}(\mathbb{R})}$ in $H^1_{\mathcal{H}}(\mathbb{R})$ is defined by
$$
||f||_{H^1_{\mathcal H}(\mathbb{R})}=\left\|\sup_{t>0}|W_t(f)|\right\|_{L^1(\mathbb{R})},\quad f\in H^1_{\mathcal H}(\mathbb{R}).
$$

Hardy spaces in the Laguerre context have been investigated by Dziuba\'nski (\cite{Dz1} and \cite{Dz2}). A function $f\in L^1(0,\infty )$ is in the Hardy space $H^1_{L_\alpha }(0,\infty )$ when $\sup_{t>0}|W_t^\alpha (f)|\in L^1(0,\infty )$. The  norm $\|\cdot\|_{H^1_{L_\alpha }(0,\infty )}$ is given by
$$
||f||_{H^1_{L_\alpha }(0,\infty )}=\left\|\sup_{t>0}|W_t^\alpha (f)|\right\|_{L^1(0,\infty )},\quad f\in H^1_{L_\alpha }(0,\infty ).
$$

Recently, Betancor, Dziuba\'nski and Garrig\'os (\cite{BDG}) have established a useful connection between the spaces $H^1_{\mathcal H}(\mathbb{R})$ and $H^1_{L_\alpha }(0,\infty )$. Let $f\in L^1(0,\infty )$. We denote by $f_{\rm o}$ the odd extension of $f$ to $\mathbb{R}$. Then, $f\in H^1_{L_\alpha}(0,\infty )$ if, and only if, $f_{\rm o}\in H^1_{\mathcal H}(\mathbb{R})$. Moreover, the quantities $||f||_{H^1_{L_\alpha }(0,\infty )}$ and $||f||_{H^1_{\mathcal H}(\mathbb{R})}$ are equivalent.

It is known that a function $f\in L^1(\mathbb{R})$ is in $H^1_{\mathcal H}(\mathbb{R})$ when and only when $g^2_{\mathbb{W}}(f)\in L^1(\mathbb{R})$ (\cite[Proposition 4, p. 124]{Stein}). In the following we establish the corresponding result in the Laguerre setting.

\begin{propo}\label{H1}
Let $\alpha >-1/2$. Then, if $f\in L^1(0,\infty )$, $f\in H^1_{L_\alpha }(0,\infty )$ if and only if, $g^2_{\mathbb{W}^\alpha }(f)\in L^1(0,\infty )$. Moreover, there exists $C>0$ such that
$$
\frac{1}{C}||f||_{H^1_{L_\alpha }(0,\infty )}\leq ||f||_{L^1(0,\infty )}+||g_{{\mathbb W}^\alpha }^2(f)||_{L^1(0,\infty )}\leq C||f||_{H^1_{L_\alpha }(0,\infty )},\quad f\in H^1_{L_\alpha }(0,\infty ).
$$
\end{propo}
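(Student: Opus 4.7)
The plan is to deduce Proposition~\ref{H1} from the Hermite characterization recalled in the introduction and the odd-extension identification of \cite{BDG}, via a transference procedure in the spirit of \cite{BFRST1}. By \cite{BDG}, a function $f\in L^1(0,\infty)$ belongs to $H^1_{L_\alpha}(0,\infty)$ if and only if its odd extension $f_{\rm o}$ belongs to $H^1_{\mathcal H}(\mathbb R)$, with equivalent norms. By \cite[Proposition~4, p.~124]{Stein} this second condition is in turn equivalent to $g_{\mathbb W}^2(f_{\rm o})\in L^1(\mathbb R)$, and
$$
\|f_{\rm o}\|_{H^1_{\mathcal H}(\mathbb R)}\sim\|f_{\rm o}\|_{L^1(\mathbb R)}+\|g_{\mathbb W}^2(f_{\rm o})\|_{L^1(\mathbb R)}.
$$
Since $W_s(f_{\rm o})$ is odd in $y$, a change of variables shows that $g_{\mathbb W}^2(f_{\rm o})$ is even on $\mathbb R$, while $\|f_{\rm o}\|_{L^1(\mathbb R)}=2\|f\|_{L^1(0,\infty)}$. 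Hence Proposition~\ref{H1} will follow once we establish
$$
\Bigl|\,\|g_{\mathbb W^\alpha}^2(f)\|_{L^1(0,\infty)}-\|g_{\mathbb W}^2(f_{\rm o})\|_{L^1(0,\infty)}\,\Bigr|\leq C\,\|f\|_{L^1(0,\infty)},\qquad f\in L^1(0,\infty).
$$

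I would carry out this comparison pointwise in $x>0$. First, split the Hermite cone $\Gamma(x)=\Gamma_+(x)\cup(\Gamma(x)\cap\{y<0\})$. On the second piece, the change of variable $y\mapsto-y$ together with $W_s(f_{\rm o})(-y)=-W_s(f_{\rm o})(y)$ turns the corresponding contribution into a square function integrated over a region where $y+x<t$, so the pointwise Gaussian decay of the Hermite kernel $W_s(y,z)$ at scale $y+z$ yields, after Minkowski's inequality, an $L^1(0,\infty)$-bound of the form $C\|f\|_{L^1(0,\infty)}$. Second, on $\Gamma_+(x)$ the reverse triangle inequality $\bigl|(\int|a_s|^2)^{1/2}-(\int|b_s|^2)^{1/2}\bigr|\leq(\int|a_s-b_s|^2)^{1/2}$ reduces the task to showing that the square function built from the difference kernel
$$
D_s(y,z)=s\frac{\partial}{\partial s}\Bigl[W_s^\alpha(y,z)-W_s(y,z)+W_s(y,-z)\Bigr],\qquad y,z>0,
$$
and the cone $\Gamma_+(x)$, defines an operator bounded from $L^1(0,\infty)$ into itself.

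To estimate this error square function I would use the local-global splitting of \cite{BFRST1}. Define the local region $L=\{(y,z)\in(0,\infty)^2:|y-z|<\min(y,z)/2\}$ and decompose $D_s=D_s^{\rm loc}+D_s^{\rm glob}$. On the global complement, the bounds $I_\alpha(u)\leq Cu^\alpha$ for $u\leq1$ and $\sqrt u\,I_\alpha(u)e^{-u}\leq C$ for $u\geq1$, combined with the Gaussian decay of the Hermite kernel away from the diagonal, give kernel estimates for $D_s^{\rm glob}$ and its $s$-derivative that directly imply $L^1$-boundedness of the associated square function. On the local part one plugs the asymptotic expansion
$$
\sqrt u\,I_\alpha(u)=\frac{e^u}{\sqrt{2\pi}}\bigl(1+O(u^{-1})\bigr),\qquad u\to\infty,
$$
with $u=2yze^{-s}/(1-e^{-2s})$, into the Mehler formula for $W_s^\alpha$ and compares the result with $W_s(y,z)-W_s(y,-z)$: the leading Gaussian factor matches $W_s(y,z)$ modulo an $O(u^{-1})$ error, while $W_s(y,-z)$ is exponentially small on $L$ because $yz$ is comparable to $(y+z)^2$ there. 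Integrating the square of $D_s^{\rm loc}$ against $dt\,dy/t^2$ over $\Gamma_+(x)$ and applying Minkowski yields a kernel $\kappa(x,z)$ with $\sup_{z>0}\int_0^\infty\kappa(x,z)\,dx<\infty$, and Fubini closes the $L^1$ estimate.

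The main obstacle will be this local comparison. Once $s\,\partial_s$ is applied to Mehler's formula for $L_\alpha$, derivatives land simultaneously on the Gaussian factor and on the Bessel-type factor; the careful bookkeeping of the $O(u^{-1})$ correction under this differentiation, and the verification that its square integrated over the area cone $\Gamma_+(x)$ produces a kernel in $(x,z)$ that is uniformly integrable in $x$, are the technical heart of the argument and absorb most of the computational effort.
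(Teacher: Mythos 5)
Your overall strategy coincides with the paper's: reduce via the odd extension and \cite{BDG} to the Hermite characterization from \cite{Stein}, and then show that $g^2_{\mathbb{W}^\alpha}(f)$ and $g^2_{\mathbb{W}}(f_{\rm o})$ differ on $(0,\infty)$ by an operator bounded on $L^1(0,\infty)$. Your local comparison (Bessel asymptotics inserted into Mehler's formula, Minkowski's inequality, and a kernel $\kappa(x,z)$ with $\sup_{z>0}\int_0^\infty\kappa(x,z)\,dx<\infty$) is essentially the paper's treatment of the diagonal piece $z\in(x/2,2x)$. The gap is in the global region, specifically for $0<z<x/2$.

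There, the bounds you invoke --- Gaussian decay of the Hermite kernel off the diagonal, $I_\alpha(u)\lesssim u^\alpha$ for $u\le 1$ and $\sqrt{u}\,I_\alpha(u)e^{-u}\lesssim 1$ for $u\ge 1$ --- only yield kernel bounds of order $1/|x-z|\sim 1/x$ (these are precisely (\ref{CZ1}) and (\ref{AcotN})). The resulting majorant is the Hardy operator $f\mapsto x^{-1}\int_0^{x/2}|f(z)|\,dz$, which is bounded on $L^p$ for $p>1$ and of weak type $(1,1)$ but \emph{not} bounded on $L^1(0,\infty)$, since $\int_{2z}^\infty x^{-1}\,dx=\infty$. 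So these estimates do not ``directly imply'' the $L^1$-boundedness you need; one must extract extra decay in $z$ as $z\to 0$. For the Hermite terms this comes from the cancellation $W_s(y,z)-W_s(y,-z)=\int_{-z}^z\frac{\partial}{\partial u}W_s(y,u)\,du$ combined with the gradient estimate (\ref{CZ2a}), which upgrades $1/x$ to $z/x^2$; for the Laguerre term it comes from the factor $(yz)^{\alpha+1/2}$ in the small Bessel-argument regime and, in the large-argument regime, from inserting the factor $\bigl(2yze^{-s}/(1-e^{-2s})\bigr)^{2\alpha+1}\ge 1$ as in (\ref{A4}), which upgrades $1/x$ to $z^{\alpha+1/2}/x^{\alpha+3/2}$. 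Only with an exponent in $x$ strictly greater than $1$ do these Hardy-type operators become bounded on $L^1$, which is what the ``if and only if'' and the norm equivalence require. The same defect affects your additive treatment of the half-cone $\{y<0\}$: the naive kernel bound there is $1/(x+z)$, again not uniformly integrable in $x$; the paper avoids this entirely by observing that evenness of $|s\frac{\partial}{\partial s}W_s(f_{\rm o})|$ gives the pointwise two-sided comparison $g^{2,+}_{\mathbb{W}}(f)\le g^2_{\mathbb{W}}(f_{\rm o})\le\sqrt{2}\,g^{2,+}_{\mathbb{W}}(f)$ on $(0,\infty)$, so that no error estimate for that region is needed at all.
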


In \cite{Xu} Xu studied the Littlewood-Paley theory for functions with values in Banach spaces. He characterized the uniform convexity and smoothness of the underlying Banach spaces by the validity in the vector valued setting of the $L^p$-inequalities for the Lusin area integral associated with the Poisson semigroup on the unit disc on the complex plane. The procedure developed in this paper allows us to obtain the results in Propositions \ref{Hermite}, \ref{Laguerre} and \ref{reverse} when the heat semigroup is replaced by the Poisson semigroup in the Hermite and Laguerre setting. We can also obtain new characterizations of the uniform convexity and smoothness of a Banach space in terms of the $L^p$-inequalities of the area Littlewood-Paley functions for the Poisson semigroup in the Laguerre setting by using \cite[Theorem 2.1]{MTX} (see also \cite{BFRST2}).

\noindent The next useful properties for the Bessel functions $I_{\alpha}$, $\alpha >-1/2$, can be found in \cite[Ch. 5]{Leb}.

\noindent ({\bf I1}) $I_{\alpha}(z)\sim z^{\alpha},\:\:{\mbox as}\:\: z\rightarrow
0^{+}$.

\noindent ({\bf I2})
$$
\sqrt{z}I_{\alpha}(z)=\frac{1}{\sqrt{2\pi}}e^{z}\Bigl(
\sum_{k=0}^{n}(-1)^{k}[\alpha,k](2z)^{-k}+O(z^{-n-1})\Bigr),\:\:{\mbox
as}\:\: z\rightarrow \infty,
$$
where $[\alpha,0]=1$, and
$$
[\alpha,k]=\frac{(4\alpha^{2}-1)(4\alpha^{2}-3^{2})\dots (4\alpha^{2}-(2k-1)^{2})}{2^{2k}\Gamma(k+1)}, \:\: k=1,2,\dots,
$$

\noindent ({\bf I3}) $\frac{d}{dz}(z^{-\alpha}I_{\alpha}(z))=z^{-\alpha}I_{\alpha+1}(z),
\:\: z\in (0,\infty).$

\noindent We also state here the following elementary properties which will be often used along the paper.

\noindent ({\bf P1}) For every $x,y\in\mathbb{R}$ and $s>0$, one has

({\bf a}) $(x-e^{-s}y)^2+(y-e^{-s}x)^2=2(x-y)^2e^{-s}+(x^2+y^2)(1-e^{-s})^2$.

({\bf b}) $(x-e^{-s}y)^2+(y-e^{-s}x)^2=(x-y)^2(1+e^{-2s})+2xy(1-e^{-s})^2=(x^2+y^2)(1+e^{-2s})-4xye^{-s}$.

({\bf c}) $(x-e^{-s}y)^2+(y-e^{-s}x)^2\geq \frac{(x-y)^2}{2}$.

\noindent ({\bf P2}) $\frac{2}{u}\leq \frac{1+e^{-u}}{1-e^{-u}}\leq \frac{2}{1-e^{-u}}$, $u>0$.

\noindent ({\bf P3}) For each $a>0$ there exists $c>0$ such that $ue^{-au}\leq c(1-e^{-u})$, $u>0$.

\noindent ({\bf P4}) For every $a>0$ and $b\geq 0$, it can be found $c>0$ for which $u^be^{-au}\leq c$, $u>0$.

Throughout this paper by $C$ we always denote a positive constant
that can change from a line to another one.

\section{Proof of Proposition \ref{Hermite}}

Let us fix $q\geq 2$. We will use the vector-valued Calder\'on-Zygmund theory
to see the $L^p$-boundedness properties of the g-function $g_\mathbb{W}^q$.

We first observe that we can write, for every $f\in L^q(\mathbb{R}
)$,
$$
g_\mathbb{W}^q(f)(x)=\left\|\left(s\frac{\partial }{\partial
s}W_s(f)(x+y)\right)_{|s=t^2}\right\|_{L^q(\Gamma
(0),\frac{dtdy}{t^2})},\quad x\in \mathbb{R} .
$$

Moreover, if $f\in L^q(\mathbb{R} )$ then
\begin{equation}\label{interchange}
\frac{\partial }{\partial s}W_s(f)(x)=\int_{-\infty }^{+\infty
}\frac{\partial }{\partial s}W_s(x,y)f(y)dy,\quad x\in \mathbb{R}
\mbox{ and }s>0.
\end{equation}

Indeed, let $f\in L^q(\mathbb{R} )$. By using (\ref{kernelW}) we
can write, for every $x,y \in \mathbb{R}$ and $s>0$,
\begin{eqnarray}\label{partials}
\frac{\partial }{\partial s}W_s(x,y)&=&-\frac{1}{2\sqrt{\pi
}}e^{-\frac{(x-e^{-s}y)^2+(y-e^{-s}x)^2}{2(1-e^{-2s})}}\frac{e^{-s/2}}{(1-e^{-2s})^{3/2}}\nonumber\\
&\times&\left\{1+e^{-2s}+2e^{-s}(y(x-e^{-s}y)+x(y-e^{-s}x))-2e^{-2s}\frac{(x-e^{-s}y)^2+(y-e^{-s}x)^2}
{1-e^{-2s}}\right\}.
\end{eqnarray}
By taking into account (P1)(a) and (P4), a straightforward manipulation leads to
\begin{eqnarray}\label{acotpartials}
\left|\frac{\partial }{\partial s}W_s(x,y)\right|&\leq
&Ce^{-\frac{(x-e^{-s}y)^2+(y-e^{-s}x)^2}
{4(1-e^{-2s})}}\frac{e^{-s/2}}{(1-e^{-2s})^{3/2}}\left(1+(|x|+|y|)
e^{-s}(1-e^{-2s})^{1/2}\right)\nonumber\\
&\leq& Ce^{-\frac{(x-e^{-s}y)^2+(y-e^{-s}x)^2}
{8(1-e^{-2s})}}\frac{e^{-s/2}}{(1-e^{-2s})^{3/2}},\quad x,y\in
\mathbb{R} \mbox{ and }s>0.
\end{eqnarray}
Moreover, if $s_0>0$ is fixed, by (P1)(c) and (P2) we obtain, for each $x,y\in \mathbb{R}$,
$$
\left|\frac{\partial }{\partial s}W_s(x,y)\right|\leq
Ce^{-\frac{(x-y)^2}
{32s}}\frac{e^{-s/2}}{(1-e^{-2s})^{3/2}}\leq Ce^{-\frac{(x-y)^2}
{64s_0}}\frac{e^{-s_0/4}}{(1-e^{-s_0})^{3/2}}, \quad \frac{s_0}{2}<s<2s_0.
$$
Since $f\in L^q(\mathbb{R})$, by the H\"older inequality one gets
$$
\int_{-\infty }^{+\infty }e^{-\frac{(x-y)^2}
{64s_0}}\frac{e^{-s_0/4}}{(1-e^{-s_0})^{3/2}}|f(y)|dy<\infty ,\quad x\in \mathbb{R}.
$$
Then, the mean value theorem and dominated convergence theorem lead to (\ref{interchange}).

Let us define the operator $T$ on $L^q(\mathbb{R})$ as follows
$$
[Tf(x)](y,t)=\int_{-\infty }^{+\infty }K(x,z)(y,t)f(z)dz, \quad x,y\in \mathbb{R} \mbox{ and
}t>0,
$$
where $K(x,z)$, $x,z\in \mathbb{R}$, denotes the function defined
on $\Gamma (0)$ by
$$
K(x,z)(y,t)=\left(s\frac{\partial }{\partial s}W_s(x+y,z)\right)_{|s=t^2}.
$$

Note that for every $f\in L^q(\mathbb{R})$, $g_\mathbb{W}^q(f)(x)=||Tf(x)||_{L^q(\Gamma (0),\frac{dydt}{t^2})}$, $x\in \mathbb{R}$.

Our objective then is to prove the following assertions:

(i) $T$ is a bounded operator from $L^q(\mathbb{R} )$ into
$L^q_{L^q(\Gamma (0),\frac{dydt}{t^2})}(\mathbb{R} )$, or equivalently, $g_\mathbb{W}^q$ is bounded from $L^q(\mathbb{R})$ into itself.

(ii) For every $x,z\in \mathbb{R}$, $x\not=z$,

\hspace{1.5em} $\displaystyle ||K(x,z)||_{L^q(\Gamma
(0),\frac{dtdy}{t^2})}\leq \frac{C}{|x-z|}$, and

\hspace{1.5em} $\displaystyle \Big\|\frac{\partial}{\partial
x}K(x,z)\Big\|_{L^q(\Gamma (0),\frac{dtdy}{t^2})}+\Big\|\frac
{\partial }{\partial z}K(x,z)\Big\|_{L^q(\Gamma
(0),\frac{dtdy}{t^2})}\leq \frac{C}{|x-z|^2}$.\vspace{0.2cm}

(iii) For every $f\in L^q(\mathbb{R})$,
$$
Tf(x)=\int_{-\infty }^{+\infty }K(x,y)f(y)dy, \quad x\not \in \mbox{ supp }f.
$$
Then the vector valued Calder\'on-Zygmund theory allows us to
obtain the desired result.

Let us establish (i). Consider, for each $r>1$, the operator $g_r$ given by
$$
g_r(f)(x)=\left\{\int_0^\infty \left|t\frac{\partial}{\partial t}W_t(f)(x)\right|^r\frac{dt}{t}\right\}^{1/r},\quad x\in \mathbb{R}.
$$

$L^p$-boundedness properties for $g_2$ were analyzed in \cite{Th2} and also in \cite{StTo2}. In \cite[Theorem 2.2]{StTo2} it was established that $f\longrightarrow g_2(f)$ is bounded from $L^p(\mathbb{R})$ into itself, for every $1<p<\infty$. Thus, in particular $g_2$ is a bounded operator from $L^q(\mathbb{R})$ into itself.

We can see that $g_q$ is also a bounded operator from $L^q(\mathbb{R})$ into itself.
In effect, since $q\geq 2$ one has
$$
||g_q(f)||_{L^q(\mathbb{R})}^q\leq\int_{-\infty }^{+\infty }\int_0^\infty \left|t\frac{\partial}{\partial t}W_t(f)(x)\right|^2\frac{dt}{t}\left(\sup_{t>0}\left|t\frac{\partial}{\partial t}W_t(f)(x)\right|\right)^{q-2}dx.
$$

Also, from (\ref{acotpartials}) and using (P1)(c) and (P3) we obtain
$$
\left|t\frac{\partial}{\partial t}W_t(f)(x)\right|\leq C\int_{-\infty }^{+\infty }|f(y)|\frac{t^{3/2}e^{-t/2}}{(1-e^{-2t})^{3/2}}\frac{e^{-\frac{(x-y)^2}{32t}}}{\sqrt{t}}dy\leq C\mathcal{W}_*(|f|)(x),\quad x\in \mathbb{R},
$$
where $\mathcal{W}_*$ represents the maximal operator associated to the heat semigroup for the Laplacian
operator, that is,
$$
\mathcal{W}_*(f)(x)=\sup_{t>0}\left|\frac{1}{2\sqrt{\pi}}\int_{-\infty }^{+\infty }f(y)\frac{e^{-\frac{(x-y)^2}{4t}}}{\sqrt{t}}dy\right|.
$$
Then, by the H\"older inequality and the fact that $g_2$ and $\mathcal{W}_*$ are bounded operators from $L^q(\mathbb{R})$ into itself, we get
\begin{eqnarray*}
||g_q(f)||_{L^q(\mathbb{R})}^q&\leq &C\int_{-\infty }^{+\infty }(g_2(f)(x))^2(\mathcal{W}_*(|f|)(x))^{q-2}dx\\
&\leq&C\left(\int_{-\infty}^{+\infty }|g_2(f)(x)|^qdx\right)^{2/q}\left(\int_{-\infty }^{+\infty}(\mathcal{W}_*(|f|)(x))^qdx\right)^{(q-2)/q}\\
&=&C||g_2(f)||_{L^q(\mathbb{R})}^2||\mathcal{W}_*(|f|)||_{L^q(\mathbb{R})}^{q-2}\leq C||f||_{L^q(\mathbb{R})}^q.
\end{eqnarray*}

Finally, we observe that
\begin{eqnarray*}
||g_\mathbb{W}^q(f)||_{L^q(\mathbb{R})}^q&=&\int_0^\infty \int_{-\infty }^{+\infty }\left|\left(s\frac{\partial}{\partial s}W_s(f)(y)\right)_{|s=t^2}\right|^q\int_{y-t}^{y+t}dxdy\frac{dt}{t^2}\\
&=&2\int_0^\infty \int_{-\infty }^{+\infty}\left|\left(s\frac{\partial}{\partial s}W_s(f)(y)\right)_{|s=t^2}\right|^qdy\frac{dt}{t}.
\end{eqnarray*}

By making the change of variables $u=t^2$, one gets
$$
||g_\mathbb{W}^q(f)||_{L^q(\mathbb{R})}^q=\int_0^\infty \int_{-\infty }^{+\infty }\left|u\frac{\partial}{\partial u}W_u(f)(y)\right|^qdy\frac{du}{u}=||g_q(f)||_{L^q(\mathbb{R})}^q.
$$
Thus, (i) is proved.

We now establish (ii).  First, let us prove that
\begin{equation}\label{CZ1}
||K(x,z)||_{L^q(\Gamma (0),\frac{dtdy}{t^2})}=\left\{\int_{\Gamma
(x)}\Big|\left(s\frac{\partial }{\partial
s}W_s(y,z)\right)_{|s=t^2}\Big|^q\frac{dtdy}{t^2}\right\}^{1/q}\leq
\frac{C}{|x-z|},\quad x,z\in \mathbb{R}, \;x\not=z.
\end{equation}

From (\ref{acotpartials}) and taking into account (P1)(c), (P3) and (P4) we can write
\begin{eqnarray*}
\int_{\Gamma (x)}\Big|s\frac{\partial }{\partial
s}W_s(y,z)_{|s=t^2}\Big|^q\frac{dtdy}{t^2}&\leq&C\int_{-\infty}^{+\infty
}\int_{|x-y|}^{+\infty }t^{2q-2}
e^{-\frac{q(z-y)^2}{32t^2}}\frac{e^{-\frac{qt^2}{2}}}{(1-e^{-2t^2})^{\frac{3q}{2}}}dtdy\\
&\hspace{-8cm}\leq &\hspace{-4cm}C\int_{-\infty}^{+\infty }\int_{|x-y|}^{+\infty }
\left(\frac{t^2e^{-t^2/3}}{1-e^{-2t^2}}\right)^{\frac{3q}{2}} \frac{1}{(t^2+(z-y)^2)^{\frac{q}{2}+1}}dtdy\\
&\hspace{-8cm}\leq &\hspace{-4cm}C\int_{-\infty}^{+\infty
}\int_{|x-y|}^{+\infty }\frac{1}{(t+|z-y|)^{q+2}}dy
\leq C\int_{-\infty}^{+\infty }\frac{1}{(|x-y|+|z-y|)^{q+1}}dy\\
&\hspace{-8cm}\leq&\hspace{-4cm}
C\left(\int_{I_{x,z}}\frac{1}{|z-x|^{q+1}}dy+\int_{\mathbb{R}
\setminus I_{x,z}}\frac{1}{|2y-x-z|^{q+1}}dy\right)\leq
\frac{C}{|x-z|^q},\quad x,z\in \mathbb{R},\;x\not=z.
\end{eqnarray*}
Here $I_{x,z}$ represents the interval $I_{x,z}=(\min\{x,z\}, \max\{x,z\})$. Thus (\ref{CZ1}) is established.

We now see that
\begin{equation}\label{CZ2}
\left|\left|\frac{\partial }{\partial
x}K(x,z)\right|\right|_{L^q(\Gamma (0),\frac{dtdy}{t^2})}+
\left|\left|\frac{\partial }{\partial
z}K(x,z)\right|\right|_{L^q(\Gamma (0),\frac{dtdy}{t^2})}\leq
\frac{C}{|x-z|^2},\quad x,z\in \mathbb{R},\;x\not=z.
\end{equation}

We will show that, when $x,z\in \mathbb{R}$, $x\not=z$,
\begin{equation}\label{CZ2a}
\left|\left|\frac{\partial }{\partial
x}K(x,z)\right|\right|_{L^q(\Gamma (0),\frac{dtdy}{t^2})}=\left\{\int_{\Gamma
(x)}\Big|\left(s\frac{\partial ^2}{\partial y\partial
s}W_s(y,z)\right)_{|s=t^2}\Big|^q\frac{dtdy}{t^2}\right\}^{1/q} \leq
\frac{C}{|x-z|^2}.
\end{equation}
The analogous property for $\left\|\frac{\partial }{\partial
z}K(x,z)\right\|_{L^q(\Gamma (0),\frac{dtdy}{t^2})}$ can be established in a similar way.

From (\ref{partials}) we have that, for every $y,z\in
\mathbb{R}$ and $s>0$,
\begin{eqnarray*}
\frac{\partial ^2}{\partial y\partial
s}W_s(y,z)&=&-\frac{(y-e^{-s}z)-(z-e^{-s}y)e^{-s}}{1-e^{-2s}}\frac{\partial}{\partial s}W_s(y,z)\\
&\hspace{-4cm}-&\hspace{-2cm}\frac{2}{\sqrt{\pi
}}e^{-\frac{(y-e^{-s}z)^2+(z-e^{-s}y)^2}{2(1-e^{-2s})}}\frac{e^{-s/2}}{(1-e^{-2s})^{3/2}}
\left(e^{-s}(z-e^{-s}y)-\frac{e^{-2s}[y-e^{-s}z-e^{-s}(z-e^{-s}y)]}{1-e^{-2s}}\right).
\end{eqnarray*}

Hence, by using (\ref{acotpartials}) we obtain
\begin{equation}\label{D2WHermite}
\left|\frac{\partial ^2}{\partial y\partial s}W_s(y,z)\right|\leq
Ce^{-\frac{(y-e^{-s}z)^2+(z-e^{-s}y)^2}{16(1-e^{-2s})}}\frac{e^{-s/2}}{(1-e^{-2s})^2},\quad
y,z\in \mathbb{R} ,s>0,
\end{equation}
and by proceeding as in the proof of (\ref{CZ1}) we conclude
(\ref{CZ2a}).

To finish we prove (iii). Let $f\in L^q(\mathbb{R})$ and let $\mathcal{K}=\mbox{ supp }f$. According to (ii) and the Minkowski inequality we can write
\begin{eqnarray*}
\left\|\int_{-\infty }^{+\infty }K(x,z)f(z)dz\right\|_{L^q(\Gamma (0),\frac{dtdy}{t^2})}&\leq &\int_{-\infty }^\infty
\left\|K(x,z)\right\|_{L^q(\Gamma (0),\frac{dtdy}{t^2})}|f(z)|dz\\
&\hspace{-10cm}\leq&\hspace{-5cm}C\int_{-\infty }^{+\infty }\frac{1}{|x-z|}|f(z)|dz\leq C\left(\int_\mathcal{K}\frac{1}{|x-z|^{q'}}dz\right)^{1/q'}\|f\|_{L^q(\mathbb{R})}<\infty ,
\end{eqnarray*}
for every $x\not\in \mathcal{K}$. Assume that $g\in L^{q'}\left(\Gamma (0),\frac{dtdy}{t^2}\right)$. We have
\begin{eqnarray*}
\int_{\Gamma (0)}g(y,t)\left(\int_{-\infty }^{+\infty }K(x,z)f(z)dz\right)(y,t)\frac{dtdy}{t^2}&=&\int_{-\infty }^{+\infty }f(z)\int_{\Gamma (0)}K(x,z)(y,t)g(y,t)\frac{dtdy}{t^2}dz\\
&\hspace{-10cm}=&\hspace{-5cm}\int_{\Gamma (0)}g(y,t)\int_{-\infty }^{+\infty }K(x,z)(y,t)f(z)dz\frac{dtdy}{t^2},\quad x\not\in \mathcal{K}.
\end{eqnarray*}
These equalities are justified by taking into account that
\begin{eqnarray*}
\int_{\Gamma (0)}|g(y,t)|\left(\int_{-\infty }^{+\infty }|K(x,z)(y,t)||f(z)|dz\right)\frac{dtdy}{t^2}&&\\
&\hspace{-10cm}\leq &\hspace{-5cm}||g||_{L^{q'}(\Gamma (0),\frac{dtdy}{t^2})}||f||_{L^q(\mathbb{R})}\left(\int_{\mathcal{K}}\frac{1}{|x-z|^{q'}}dz\right)^{1/q'}<\infty ,\quad x\not \in \mathcal{K}.
\end{eqnarray*}
Hence
$$
Tf(x)=\int_{-\infty }^{+\infty }K(x,z)f(z)dz, \quad x\not \in \mathcal{K}.
$$
\section{Proof of Proposition \ref{Laguerre}}
In this section we
exploit the arguments developed in \cite{BFRST1} where the basic idea
is to compare in some region the g-functions $g_{{\mathbb W}^{\alpha}}^q$ and
$g_{\mathbb W}^q$ and then to tranfer $L^{p}$-boundedness properties from
$g_{\mathbb{W}}^q$ to $g_{\mathbb{W}^{\alpha}}^q$. We
first observe that if $f_{\rm o}$ denotes the odd extension to $\mathbb{R}$ of a suitable function $f$ defined on $(0,\infty)$, we can write
\begin{equation}\label{Ws}
W_{s}(f_{\rm o})(x)=\int_{0}^{\infty}(W_{s}(x,y)-W_{s}(x,-y))f(y)dy,\quad
x\in \mathbb{R}\mbox{ and }s>0.
\end{equation}
Then $W_s(f_{\rm o})$ is odd and $g_\mathbb{W}^q(f_{\rm o})$ is even. Thus we
get
\begin{eqnarray*}
g_\mathbb{W}^q(f_{\rm o})(x)=g_\mathbb{W}^q(f_{\rm o})(|x|)&=&\left\{\int_{\Gamma
(|x|)}\Big|s\frac{\partial }{\partial
s}W_s(f_{\rm o})(y)_{|s=t^2}\Big|^q\frac{dtdy}{t^2}\right\}^{1/q}\\
&\leq& \left\{2\int_{\Gamma _+(|x|)}\Big|s\frac{\partial
}{\partial
s}W_s(f_{\rm o})(y)_{|s=t^2}\Big|^q\frac{dtdy}{t^2}\right\}^{1/q},\quad
x\in \mathbb{R} .
\end{eqnarray*}
Moreover, it is clear that
$$
\left\{\int_{\Gamma _+(|x|)}\Big|s\frac{\partial }{\partial
s}W_s(f_{\rm o})(y)_{|s=t^2}\Big|^q\frac{dtdy}{t^2}\right\}^{1/q}\leq
g_\mathbb{W}^q(f_{\rm o})(x), \quad  x\in \mathbb{R}.
$$
Hence, according to Proposition \ref{Hermite}, the operator
$g_\mathbb{W}^{q,+}$ defined by
\begin{equation}\label{gq+}
g_\mathbb{W}^{q,+}(f)(x)=\left\{\int_{\Gamma _+(x)}\Big|s\frac{\partial
}{\partial
s}W_s(f_{\rm o})(y)_{|s=t^2}\Big|^q\frac{dtdy}{t^2}\right\}^{1/q    },\quad
x\in (0,\infty ),
\end{equation}
is bounded from $L^p(0,\infty )$ into itself, for every
$1<p<\infty$, and from $L^1(0,\infty )$ into $L^{1,\infty
}(0,\infty )$.

Then, the proof of Proposition \ref{Laguerre} will be finished
when we establish the $L^p$-boundedness properties for the
operator $D_{q,\alpha }=g_{\mathbb{W}^\alpha}^q-g_\mathbb{W}^{q,+} $. Let us describe the main steps
we follow.

By taking into account (\ref{Ws}) and using triangular inequality in $L^q(\Gamma (0),\frac{dtdy}{t^2})$ we write
\begin{eqnarray*}
|D_{q,\alpha }(f)(x)|&\leq &\left\{\int_{\Gamma _+(x)}\left|\int_0^\infty \left[s\frac{\partial }{\partial s}(W_s^\alpha (y,z)
-W_s(y,z))\right]_{|s=t^2}f(z)dz\right|^q\frac{dtdy}{t^2}\right\}^{1/q}\\
&+&\left\{\int_{\Gamma _+(x)}\left|\int_0^\infty \left[s\frac{\partial }{\partial s}(W_s(y,-z))\right]_{|s=t^2}
f(z)dz\right|^q\frac{dtdy}{t^2}\right\}^{1/q},\quad x\in (0,\infty ).
\end{eqnarray*}

We split the first integral in three parts and, by using Minkowsky inequality, we get
\begin{eqnarray*}
 |D_{q,\alpha }(f)(x)|&\leq &\Biggl\{\int_{\Gamma_{+}(x)}\biggl|\biggl(\int_{0}^{\frac{x}{2}}+\int_{\frac{x}{2}}^{2x}
 +\int_{2x}^{\infty}\biggr)\left[s\frac{\partial }{\partial s}(W_s^\alpha (y,z)-W_s(y,z))\right]_{|s=t^2}f(z)dz\biggr|^q \frac{dtdy}{t^{2}}
                             \Biggr\}^{1/q}\\
&+&\Biggl\{\int_{\Gamma_{+}(x)}\biggl|\int_{0}^{\infty}\left[s\frac{\partial }{\partial s}
(W_s(y,-z))\right]_{|s=t^2}f(z)dz\biggr|^q\frac{dtdy}{t^{2}}
                             \Biggr\}^{1/q}\\
&\leq&\left(\int_{(0,\infty )\setminus  (\frac{x}{2},2x)}\right)\left[
\Biggl\{\int_{\Gamma_{+}(x)}\biggl|\left(s\frac{\partial}{\partial s}
                             W_{s}^{\alpha}(y,z)\right)_{|s=t^{2}}\biggr|^q\:\frac{dtdy}{t^{2}}
                             \Biggr\}^{1/q}\right.\\
&&\hspace{3cm}+\left.\Biggl\{\int_{\Gamma_{+}(x)}\biggl|\left(s\frac{\partial}{\partial s}
                             W_{s}(y,z)\right)_{|s=t^{2}}\biggr|^q\:\frac{dtdy}{t^{2}}
                             \Biggr\}^{1/q}\right]|f(z)|dz\\
&+&\int_0^\infty \left\{\int_{\Gamma _+(x)}\left|\left(s\frac{\partial }{\partial s}
(W_s(y,-z))\right)_{|s=t^2}\right|^q\frac{dtdy}{t^2}\right\}^{1/q}|f(z)|dz\\
&+&\int_{\frac{x}{2}}^{2x}\Biggl\{\int_{\Gamma_{+}(x)}\biggl|\left(s\frac{\partial
}{\partial s}(W_s^\alpha (y,z)-W_s(y,z))\right)_{|s=t^2}\biggr|^q
\frac{dtdy}{t^{2}}
                             \Biggr\}^{1/q}|f(z)|dz\\[0.2cm]
&=&A_{1}(|f|)(x)+A_{2}(|f|)(x)+A_{3}(|f|)(x),\quad x\in (0,\infty).
\end{eqnarray*}

Suitable estimates for the kernels of the operators $A_{1}$ and
$A_{2}$ allow us to mayore $A_{1}$ and $A_{2}$  by certain Hardy
type operators. Then the boundedness $L^{p}$-properties of $A_{1}$
and $A_{2}$ are obtained from the corresponding ones for Hardy
type operators (see for instance, \cite[p. 20]{Zyg}). To prove
that $A_{3}$ is bounded from $L^{p}(0,\infty)$ into itself, for
each $1\leq p\leq\infty$, we will find a nonnegative function $H(x,z)$,
$0<\frac{x}{2}<z<2x<\infty$, verifying that
$$
\Biggl\{\int_{\Gamma_{+}(x)}\biggl|\left(s\frac{\partial}{\partial s}
                             [W_{s}^{\alpha}(y,z)-W_{s}(y,z)]\right)_{|s=t^{2}}\biggr|^q \frac{dtdy}{t^{2}}
                             \Biggr\}^{1/q}\leq H(x,z),
$$
and such that the operator $\mathfrak{B}$ defined by
$$
\mathfrak{B}(f)(x)=\int_{\frac{x}{2}}^{2x}H(x,z)f(z)dz
$$
is bounded from $L^{p}(0,\infty)$ into itself, for every $1\leq p\leq \infty$.

Next we study the operators that we have defined above.

First, we obtain some estimations which will be used later.
According to (I3) we can write, for every $s,y,z\in
(0,\infty )$,
\begin{eqnarray}\label{dWs}
\frac{\partial }{\partial s}W_s^\alpha (y,z)&=&\frac{\partial
}{\partial
s}\left[2^{\alpha +1}(yz)^{\alpha +1/2}\left(\frac{e^{-s}}{1-e^{-2s}}\right)^{\alpha
+1}e^{-\frac{1}{2}(y^2+z^2)\frac{1+e^{-2s}}{1-e^{-2s}}}\left(\frac{2yze^{-s}}{1-e^{-2s}}\right)^{-\alpha}I_\alpha
\left(\frac{2yze^{-s}}{1-e^{-2s}}\right)\right]\nonumber\\
&=&2^{\alpha +1}(yz)^{\alpha
+1/2}e^{-\frac{1}{2}(y^2+z^2)\frac{1+e^{-2s}}{1-e^{-2s}}}\left\{
\left[-(\alpha
+1)\frac{1+e^{-2s}}{1-e^{-2s}}+2(y^2+z^2)\left(\frac{e^{-s}}{1-e^{-2s}}\right)^2\right]\right.\\
&\times& \left(\frac{e^{-s}}{1-e^{-2s}}\right)^{\alpha
+1}\left(\frac{2yze^{-s}}{1-e^{-2s}}\right)^{-\alpha}I_\alpha
\left(\frac{2yze^{-s}}{1-e^{-2s}}\right)\nonumber\\
&-&\left.(2yz)^2\left(\frac{e^{-s}}{1-e^{-2s}}\right)^{\alpha
+3}\frac{1+e^{-2s}}{1-e^{-2s}}\left(\frac{2yze^{-s}}{1-e^{-2s}}\right)^{-(\alpha
+1)}I_{\alpha
+1}\left(\frac{2yze^{-s}}{1-e^{-2s}}\right)\right\}.\nonumber
\end{eqnarray}
From (I1) and (P2) one has, for every $s,y,z\in (0,\infty )$ such
that $\frac{e^{-s}yz}{1-e^{-2s}}\leq 1$,
\begin{eqnarray}\label{modWs1}
\left|\frac{\partial }{\partial s}W_s^\alpha
(y,z)\right|&\leq&C(yz)^{\alpha
+1/2}e^{-\frac{1}{2}(y^2+z^2)\frac{1+e^{-2s}}{1-e^{-2s}}}\\
&\times&\left\{\left[\frac{1}{1-e^{-2s}}+(y^2+z^2)\left(\frac{1}{1-e^{-2s}}\right)^2\right]
\left(\frac{e^{-s}}{1-e^{-2s}}\right)^{\alpha
+1}\right.\nonumber\\
&+&\left.(yz)^2\left(\frac{e^{-s}}{1-e^{-2s}}\right)^{\alpha
+3}\frac{1}{1-e^{-2s}}\right\}\nonumber\\
&\leq&C(yz)^{\alpha +1/2}e^{-\frac{y^2+z^2}{8s}}\frac{e^{-(\alpha
+1)s}}{(1-e^{-2s})^{\alpha +2}}.\nonumber
\end{eqnarray}
Moreover, by using (I2) for $n=0$, it follows that, for every $s,y,z\in
(0,\infty )$ such that $\frac{e^{-s}yz}{1-e^{-2s}}\geq 1$,
\begin{eqnarray}\label{modWs2}
\hspace{1cm}\left|\frac{\partial }{\partial s}W_s^\alpha
(y,z)\right|&\leq&C(yz)^{\alpha
+1/2}e^{-\frac{1}{2}(y^2+z^2)\frac{1+e^{-2s}}{1-e^{-2s}}+2yz\frac{e^{-s}}{1-e^{-2s}}}\\
&\times&\left\{(yz)^{-\alpha
-1/2}\left(\frac{e^{-s}}{1-e^{-2s}}\right)^{1/2}\left[\frac{1}{1-e^{-2s}}+(y^2+z^2)
\left(\frac{e^{-s}}{1-e^{-2s}}\right)^2\right] \right.\nonumber\\
&+&\left.(yz)^{-\alpha
+1/2}\left(\frac{e^{-s}}{1-e^{-2s}}\right)^{3/2
}\left(\frac{1}{1-e^{-2s}}\right)\right\}\nonumber
\end{eqnarray}
\begin{eqnarray*}
&\leq&Ce^{-\frac{(z-e^{-s}y)^2+(y-e^{-s}z)^2}{2(1-e^{-2s})}}\left[\frac{e^{-s/2}}{(1-e^{-2s})^{3/2}}
+(y^2+z^2)\frac{e^{-5s/2}}{(1-e^{-2s})^{5/2}}+yz\frac{e^{-3s/2}}{(1-e^{-2s})^{5/2}}\right]\nonumber\\
&\leq&Ce^{-\frac{(z-e^{-s}y)^2+(y-e^{-s}z)^2}{2(1-e^{-2s})}}\frac{e^{-s/2}}{(1-e^{-2s})^{3/2}}\left[
1+(y^2+z^2)\frac{e^{-s}}{1-e^{-2s}}\right]\nonumber\\
&\leq&Ce^{-\frac{(z-e^{-s}y)^2+(y-e^{-s}z)^2}{2(1-e^{-2s})}}\frac{e^{-3s/2}}{(1-e^{-2s})^{5/2}}(y^2+z^2).
\end{eqnarray*}

Also, (I2), for $n=1$ and $n=2$, allows us to write, for
every $s,y,z\in (0,\infty )$ and $\frac{e^{-s}yz}{1-e^{-2s}}\geq
1$,
\begin{eqnarray*}
\frac{\partial }{\partial s}W_s^\alpha (y,z)&=&-
\frac{1}{\sqrt{\pi
}}e^{-\frac{1}{2}(y^2+z^2)\frac{1+e^{-2s}}{1-e^{-2s}}+2yz\frac{e^{-s}}{1-e^{-2s}}}
\frac{e^{-s/2}}{(1-e^{-2s})^{3/2}}\\
&\hspace{-4cm}\times &\hspace{-2cm}\left\{\left[(\alpha +1)
(1+e^{-2s})-\frac{2(y^2+z^2)e^{-2s}}{1-e^{-2s}}\right]
\left(1-[\alpha ,1]\frac{1-e^{-2s}}{4yze^{-s}}+O\left(\left(\frac{1-e^{-2s}}{yze^{-s}}\right)^2\right)\right)\right.\\
&\hspace{-4cm}+&\hspace{-2cm}\left.
2yz\frac{e^{-s}(1+e^{-2s})}{1-e^{-2s}}\left( 1-[\alpha
+1,1]\frac{1-e^{-2s}}{4yze^{-s}}+[\alpha
+1,2]\left(\frac{1-e^{-2s}}{4yze^{-s}}\right)^2+O\left(
\left(\frac{1-e^{-2s}}{yze^{-s}}\right)^3\right)\right)\right\}\\
&\hspace{-4cm}=&\hspace{-2cm}\frac{\partial}{\partial
s}W_s(y,z)-\frac{1}{\sqrt{\pi
}}e^{-\frac{(z-e^{-s}y)^2+(y-e^{-s}z)^2}{2(1-e^{-2s})}}\frac{e^{-s/2}}{(1-e^{-2s})^{3/2}}\\
&\hspace{-4cm}\times&\hspace{-2cm} \left\{\Big(\alpha
+\frac{1}{2}\Big)(1+e^{-2s})-[\alpha ,1](\alpha
+1)\frac{(1+e^{-2s})(1-e^{-2s})}{4yze^{-s}}+[\alpha
,1]\frac{(y^2+z^2)e^{-s}}{2yz}\right.\\
&\hspace{-4cm}-&\hspace{-2cm}\left.\frac{[\alpha
+1,1]}{2}(1+e^{-2s})+\frac{[\alpha
+1,2]}{2}\frac{(1+e^{-2s})(1-e^{-2s})}{4yze^{-s}}+
O\left(\frac{1-e^{-2s}}{yze^{-s}}\right)\right\}\\
&\hspace{-4cm}=&\hspace{-2cm}\frac{\partial}{\partial
s}W_s(y,z)-\frac{1}{\sqrt{\pi
}}e^{-\frac{(z-e^{-s}y)^2+(y-e^{-s}z)^2}{2(1-e^{-2s})}}\frac{e^{-s/2}}{(1-e^{-2s})^{3/2}}\\
&\hspace{-4cm}\times&\hspace{-2cm} \left\{-\frac{[\alpha ,1]}{2}(1-e^{-s})^2+O\left(\frac{(y-z)^2e^{-s}}{yz}\right)+
O\left(\frac{1-e^{-2s}}{yze^{-s}}\right)\right\}.
\end{eqnarray*}

Then, by using again (P1)(b) and (P2) we get, for every $s,y,z\in (0,\infty )$ such that
$\frac{e^{-s}yz}{1-e^{-2s}}\geq 1$,
\begin{eqnarray}\label{Ad}
\left|\frac{\partial }{\partial s}W_s^\alpha
(y,z)-\frac{\partial}{\partial s}W_s(y,z)\right| &&\nonumber\\
&\hspace{-4cm}\leq&\hspace{-2cm}
Ce^{-\frac{(y-z)^2(1+e^{-2s})}{2(1-e^{-2s})}-\frac{yz(1-e^{-s})}{2}}\left\{(1-e^{-s})^{1/2}
+\frac{(y-z)^2e^{-3s/2}}{yz(1-e^{-2s})^{3/2}}+\frac{e^{s/2}}{yz(1-e^{-s})^{1/2}}\right\}\nonumber\\
&\hspace{-4cm}\leq
&\hspace{-2cm}Ce^{-\frac{(y-z)^2}{2s}}
\frac{e^{s/2}}{yz(1-e^{-s})^{1/2}}.
\end{eqnarray}

We now study operators $A_1$ and $A_2$. Let $M$ and $N$ be the functions defined by
$$
M(x,z)=\left\{\int_{\Gamma _+(x)}\left|\left(s\frac{\partial
}{\partial s}W_s
(y,z)\right)_{|s=t^2}\right|^q\frac{dtdy}{t^2}\right\}^{1/q},\quad x,z\in (0,\infty ).
$$
and
$$
N(x,z)=\left\{\int_{\Gamma _+(x)}\left|\left(s\frac{\partial
}{\partial s}W_s^\alpha
(y,z)\right)_{|s=t^2}\right|^q\frac{dtdy}{t^2}\right\}^{1/q},\quad x,z\in (0,\infty ),
$$

By using (\ref{CZ1}) one gets
$$
|M(x,z)|\leq \frac{C}{|x-z|}\leq C\left\{\begin{array}{ll}
                        \displaystyle \frac{1}{x},&0<z<x/2,\\
                        &\\
                        \displaystyle \frac{1}{z},&0<2x<z,
                     \end{array}
\right.
$$
and that,
$$
|M(x,-z)|\leq \frac{C}{x+z},\quad x,z\in (0,\infty ).
$$

We also claim that
\begin{equation}\label{AcotN}
|N(x,z)|\leq C\left\{\begin{array}{ll}
                        \displaystyle \frac{1}{x},&0<z<x/2,\\[0.5cm]
                        \displaystyle \frac{1}{z},&0<2x<z.
                     \end{array}
\right.
\end{equation}
Thus, we can write
$$
A_1(|f|)(x)+A_2(|f|)(x)\leq C\left(\frac{1}{x}\int_0^x|f(z)|dz+\int_x^\infty \frac{|f(z)|}{z}dz\right),\quad x\in (0,\infty ).
$$
Wellknown properties of Hardy operators (\cite[p. 20]{Zyg})
allow us to establish that $A_1$ and $A_2$ are bounded operators
from $L^p(0,\infty )$ into itself, for every $1<p<\infty$,
and from $L^1(0,\infty )$ into $L^{1,\infty }(0,\infty
)$.

Let us establish (\ref{AcotN}). Denote by $L(z)$ and $R(z)$, $z\in
(0,\infty )$, the sets
\begin{equation}\label{L}
L(z)=\left\{(y,t)\in (0,\infty )\times (0,\infty
):\frac{yze^{-t^2}}{1-e^{-2t^2}}\leq 1\right\},
\end{equation}
and
\begin{equation}\label{R}
R(z)=\left\{(y,t)\in (0,\infty )\times (0,\infty ):\frac{yze^{-t^2}}{1-e^{-2t^2}}\geq 1\right\}.
\end{equation}

We have that $N(x,z)\leq N_1(x,z) +N_2(x,z)$, $x,z\in (0,\infty )$, where
$$
N_1(x,z)=\left\{\int_{\Gamma _+(x)\cap L(z)}\left|\left(s\frac{\partial
}{\partial s}W_s^\alpha
(y,z)\right)_{|s=t^2}\right|^q\frac{dtdy}{t^2}\right\}^{1/q},\quad x,z\in
(0,\infty ),
$$
and
$$
N_2(x,z)=\left\{\int_{\Gamma _+(x)\cap R(z)}\left|\left(s\frac{\partial
}{\partial s}W_s^\alpha
(y,z)\right)_{|s=t^2}\right|^q\frac{dtdy}{t^2}\right\}^{1/q} ,\quad
x,z\in (0,\infty ).
$$

From (\ref{modWs1}) and using (P3) and (P4) one gets
\begin{eqnarray}\label{N1}
N_1(x,z)&\leq&Cz^{\alpha +1/2}\left\{\int_0^\infty \int_{|x-y|}^\infty y^{q(\alpha
+1/2)}e^{-\frac{q(y^2+z^2)}{8t^2}}\frac{e^{-q(\alpha +1)t^2}t^{2q-2}}{(1-e^{-2t^2})^{q(\alpha
+2)}}dtdy\right\}^{1/q}\nonumber\\
&\leq& Cz^{\alpha +1/2}\left\{\int_0^\infty \int_{|x-y|}^\infty
\frac{y^{q(\alpha +1/2)}}{(t^2+y^2+z^2)^{q(\alpha +1)
+1}}\left(\frac{t^2e^{-\frac{\alpha +1}{\alpha
+2}t^2}}{1-e^{-2t^2}}\right)^{q(\alpha
+2)}dtdy\right\}^{1/q}\nonumber\\
&\leq& Cz^{\alpha +1/2}\left\{\int_0^\infty \int_{|x-y|}^\infty
\frac{1}{(t+y+z)^{q(\alpha +3/2)+2}}dtdy\right\}^{1/q}\nonumber\\
&\leq &Cz^{\alpha +1/2}\left\{\left(\int_0^x+\int_x^\infty \right)
\frac{1}{(|x-y|+y+z)^{q(\alpha +3/2)+1}}dy\right\}^{1/q}\nonumber\\
&\leq& C\frac{z^{\alpha +1/2}}{(x+z)^{\alpha +3/2}} \leq
C\left\{\begin{array}{ll}
            \displaystyle \frac{z^{\alpha +1/2}}{x^{\alpha +3/2}},
            &\displaystyle 0<z<x,\\[0.3cm]
            \displaystyle \frac{1}{z},&0<x<z .
        \end{array}
\right.
\end{eqnarray}

On the other hand, let $M_{x,z}=\max\{x,z\}$ and $m_{x,z}=\min
\{x,z\}$. By taking into account properties (P1)(c), (P2), (P3) and (P4) we obtain,
from (\ref{modWs2})
\begin{eqnarray}\label{N2}
N_2(x,z)&\leq&C\left\{\int_0^\infty \int_{|x-y|}^\infty
e^{-\frac{q(y-z)^2}{4t^2}}\frac{e^{-3qt^2/2}t^{2q-2}}{(1-e^{-2t^2})^{5q/2}}(y^2+z^2)^q
dtdy\right\}^{1/q}\nonumber\\
&\leq &C\left\{\int_0^\infty \int_{|x-y|}^\infty
\frac{(y^2+z^2)^q}{(t^2+|y-z|^2)^{3q/2+1}}dtdy\right\}^{1/q}\leq
C\left\{\int_0^\infty
\frac{(y+z)^{2q}}{(|x-y|+|y-z|)^{3q+1}}dy\right\}^{1/q}\nonumber\\
&\leq
&C\left(M_{x,z}^2\left\{\int_0^{m_{x,z}}\frac{1}{(x+z-2y)^{3q+1}}dy\right\}^{1/q}
+M_{x,z}^2\left\{\int_{m_{x,z}}^{M_{x,z}}
\frac{1}{|x-z|^{3q+1}}dy\right\}^{1/q}\right.\nonumber\\
&+&\left.\frac{M_{x,z}^2}{|x-z|^2}\left\{\int_{M_{x,z}}^\infty \frac{1}{(2y-x-z)^{q+1}}dy\right\}^{1/q}\right)\nonumber\\
&&\leq C\frac{M_{x,z}^2}{|x-z|^3}\leq C\left\{\begin{array}{ll}
                \displaystyle \frac{1}{x},&\displaystyle 0<z<\frac{x}{2},\\[0.5cm]
                \displaystyle \frac{1}{z},&0<2x<z.
                \end{array}
\right.
\end{eqnarray}

\noindent In the fourth inequality we have used that, for each $x,z\in
(0,\infty )$,  $h_{x,z}(y)=\frac{y+z}{2y-x-z}$, is a decreasing
function on $(0,\infty )$. Hence, $h_{x,z}(y)\leq
h_{x,z}(M_{x,z})\leq 2\frac{M_{x,z}}{|x-z|}$, when $y\geq
M_{x,z}$.

Estimations (\ref{N1}) and (\ref{N2}) lead to (\ref{AcotN}).

Finally we study the operator $A_3$. We need to estimate the function
$$
G(x,z)=\Biggl\{\int_{\Gamma_{+}(x)}\biggl|\left[s\frac{\partial}{\partial
s}(W_{s}^{\alpha}(y,z)-W_{s}(y,z))\right]_{|s=t^{2}}\biggr|^q
\frac{dtdy}{t^{2}}\Biggr\}^{1/q},\quad 0<\frac{x}{2}<z<2x.
$$
By using again the sets $L(z)$ and $R(z)$, $z\in (0,\infty )$ (see
(\ref{L}) and (\ref{R})) we write
\begin{eqnarray*}
G(x,z)&\leq &C\left[\left\{\int_{\Gamma _+(x)\cap
L(z)}\left|\left[s\frac{\partial }{\partial s}W_s^\alpha
(y,z)\right]_{|s=t^2}\right|^q\frac{dtdy}{t^2}\right\}^{1/q}\right.\\
&+&\left\{\int_{\Gamma _+(x)\cap L(z)}\left|\left[s\frac{\partial
}{\partial s}W_s
(y,z)\right]_{|s=t^2}\right|^q\frac{dtdy}{t^2}\right\}^{1/q}\\
&+&\left.\left\{\int_{\Gamma _+(x)\cap R(z)}\left|\left[s\frac{\partial
}{\partial s}(W_s^\alpha
(y,z)-W_s(y,z))\right]_{|s=t^2}\right|^q\frac{dtdy}{t^2}\right\}^{1/q}\right]\\
&=&C[N_1(x,z)+G_2(x,z)+G_3(x,z)],\quad 0<\frac{x}{2}<z<2x.
\end{eqnarray*}

From (\ref{N1}) we have that
\begin{equation}\label{N1b}
N_1(x,z)\leq \frac{C}{z},\quad 0<\frac{x}{2}<z<2x.
\end{equation}

On the other hand, by (\ref{acotpartials}) and (P2) it follows that, when
$(y,s)\in L(z)$,
\begin{equation}\label{dWsacot}
\left|\frac{\partial}{\partial s}W_s(y,z)\right|\leq
Ce^{-(y^2+z^2)\frac{1+e^{-2s}}{8(1-e^{-2s})}}\frac{e^{-s/2}}{(1-e^{-2s})^{3/2}}\leq
Ce^{-\frac{y^2+z^2}{8s}}\frac{e^{-s/2}}{(1-e^{-2s})^{3/2}}.
\end{equation}
We note that the right side of (\ref{dWsacot}) coincides with the
right hand side of (\ref{modWs1}) when $\alpha =-1/2$. Hence by
proceeding as in the proof of (\ref{N1}) we conclude that
\begin{equation}\label{G2}
G_2(x,z)\leq \frac{C}{z},\quad 0<\frac{x}{2}<z<2x.
\end{equation}

Finally, considering (\ref{Ad}) and again (P3) and (P4), we get, when $0<\frac{x}{2}<z<2x$,
\begin{eqnarray*}
G_3(x,z)&\leq&C\left\{\int_{\Gamma _+(x)\cap R(z)}\left(\frac{e^{t^2/2}e^{-\frac{(y-z)^2}{2t^2}}}
{zy(1-e^{-t^2})^{1/2}}\right)^qt^{2q-2}dtdy\right\}^{1/q}\nonumber\\
&\leq&C\left\{\int_0^{\frac{z}{2}}\int_{|x-y|}^\infty
e^{-\frac{q(y-z)^2}{2t^2}}\left(\frac{e^{-t^2/2}}
{(1-e^{-t^2})^{3/2}}\right)^qt^{2q-2}dtdy\right\}^{1/q}\nonumber\\
&+&\left.\int_{\frac{z}{2}}^\infty \int_{|x-y|}^\infty
e^{-\frac{q(y-z)^2}{2t^2}}\left(\frac{e^{-t^2/4}}
{(zy)^{1/4}(1-e^{-t^2})^{5/4}}\right)^qt^{2q-2}dtdy\right\}^{1/q}\nonumber\\
&\leq&C\left\{\int_0^{\frac{z}{2}}\int_{|x-y|}^\infty
\frac{1}{(t^2+(z-y)^2)^{q/2+1}}dtdy+\frac{1}{z^{q/2}}\int_{\frac{z}{2}}^\infty
\int_{|x-y|}^\infty \frac{1}{(t^2+(z-y)^2)^{q/4+1}}dtdy\right\}^{1/q}\nonumber\\
&\leq&C\left\{\int_0^{\frac{z}{2}}\frac{1}{(|x-y|+|z-y|)^{q+1}}dy+\frac{1}{z^{q/2}}\int_{\frac{z}{2}}^\infty
\frac{1}{(|x-y|+|z-y|)^{q/2+1}}dy\right\}^{1/q}\nonumber\\
&\leq&C\left\{\int_0^{\frac{z}{2}}\frac{1}{(x+z-2y)^{q+1}}dy+\frac{1}{z^{q/2}}\int_0^\infty
\frac{1}{(|x-y|+|z-y|)^{q/2+1}}dy\right\}^{1/q}\nonumber\\
&\leq &C\left\{\frac{1}{z^q}+\frac{1}{z^{q/2}|x-z|^{q/2}}\right\}^{1/q}.
\end{eqnarray*}
The second term in the last inequality can be obtained in the same way as in the proof of (\ref{CZ1}).
Then, we have
\begin{equation}\label{G3}
G_3(x,z)\leq \frac{C}{z}\left(1+\sqrt{\frac{z}{|x-z|}}\right),\quad 0<\frac{x}{2}<z<2x.
\end{equation}
and by (\ref{N1b}), (\ref{G2}) and (\ref{G3}) we conclude that
$$
G(x,z)\leq \frac{C}{z}\left(1+\sqrt{\frac{z}{|z-x|}}\right),\quad
0<\frac{x}{2}<z<2x.
$$

The operator $\mathfrak{B}$ defined by
$$
\mathfrak{B}(f)(x)=\int_{\frac{x}{2}}^{2x}\frac{1}{z}\left(1+\sqrt{\frac{z}{|z-x|}}\right)f(z)dz,\quad
x\in (0,\infty ),
$$
is bounded from $L^p(0,\infty )$ into itself for every $1\leq
p\leq \infty $. Indeed, note that
$$
\int_{\frac{x}{2}}^{2x}\frac{1}{z}\left(1+\sqrt{\frac{z}{|z-x|}}\right)dz=
\int_{\frac{1}{2}}^{2}\frac{1}{u}\left(1+\sqrt{\frac{u}{|1-u|}}\right)du>0,\quad
x\in (0,\infty ).
$$
Jensen's inequality allows us to show the boundedness on
$L^p(0,\infty )$, $1\leq p\leq \infty$, of the operator
$\mathcal{B}$. Hence the operator $A_3$ is bounded from
$L^p(0,\infty)$ into itself, for every $1\leq p\leq \infty $. The
proof is thus completed.

\section{Proof of Proposition \ref{reverse}}
In order to prove Proposition \ref{reverse} we first establish the following property.
\begin{propo}\label{polarizacion}
Let $1<p<\infty$. For every $f\in L^p(0,\infty )$ and $h\in L^{p'}(0,\infty )$, one has
$$
\int_0^\infty f(x)h(x)dx=8\int_0^\infty\int_{\Gamma _+(x)}s\frac{\partial }{\partial s}W_s^\alpha (f)(y)_{|s=t^2}
s\frac{\partial }{\partial s}W_s^\alpha (h)(y)_{|s=t^2}\frac{dtdy}{t|J_t(y)|}dx.
$$
where $J_t(y)=\{x\in (0,\infty ): |x-y|<t\}$, $y,t>0$. Here $p'$ denotes, as usual, the exponent conjugate of $p$.
\end{propo}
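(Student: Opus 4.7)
The plan is to begin with a Fubini interchange that collapses the outer integral and cancels the factor $|J_t(y)|$. For any fixed $(y,t)\in(0,\infty)\times(0,\infty)$ one has $\int_0^\infty \chi_{\{|x-y|<t\}}(x)\,dx = |J_t(y)|$, so, provided absolute convergence holds,
\begin{equation*}
\int_0^\infty \int_{\Gamma_+(x)} F(y,t)\,\frac{dt\,dy}{t|J_t(y)|}\,dx \;=\; \int_0^\infty \int_0^\infty F(y,t)\,\frac{dt\,dy}{t}.
\end{equation*}
Applied to the right-hand side of the proposition, followed by the change of variable $u=t^2$ (which turns $\frac{dt}{t}$ into $\frac{du}{2u}$ and $s\partial_s W_s^\alpha|_{s=t^2}$ into $u\partial_u W_u^\alpha$), the identity reduces to
\begin{equation*}
\int_0^\infty f(x) h(x)\,dx \;=\; 4 \int_0^\infty \int_0^\infty u\,\partial_u W_u^\alpha(f)(y)\,\partial_u W_u^\alpha(h)(y)\,du\,dy.
\end{equation*}

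Next, I would verify this reduced identity on the dense subspace $\mathcal{D}$ of finite linear combinations of $\{\varphi_n^\alpha\}_{n\in\mathbb{N}}$ by a direct spectral computation. For such $f,h$, termwise differentiation yields $\partial_u W_u^\alpha(f)=-\sum_n (2n+\alpha+1)c_n^\alpha(f)e^{-(2n+\alpha+1)u}\varphi_n^\alpha$ and similarly for $h$. Orthonormality of $\{\varphi_n^\alpha\}$ in $L^2(0,\infty)$ gives
\begin{equation*}
\int_0^\infty \partial_u W_u^\alpha(f)(y)\,\partial_u W_u^\alpha(h)(y)\,dy = \sum_n (2n+\alpha+1)^2 c_n^\alpha(f)\,c_n^\alpha(h)\,e^{-2(2n+\alpha+1)u},
\end{equation*}
and the elementary formula $\int_0^\infty u\,e^{-2\lambda u}\,du = 1/(4\lambda^2)$ combined with Parseval's identity produces $\tfrac14 \sum_n c_n^\alpha(f)c_n^\alpha(h) = \tfrac14\int_0^\infty f\,h$, which, multiplied by $4$, matches the left-hand side.

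Finally, I would extend from $\mathcal{D}$ to arbitrary $f\in L^p(0,\infty)$ and $h\in L^{p'}(0,\infty)$ by continuity, using that $\mathcal{D}$ is dense in every $L^p(0,\infty)$, $1<p<\infty$. The left-hand side is plainly a continuous bilinear form. For the right-hand side, the elementary geometric bound $t\leq |J_t(y)|\leq 2t$ gives $\frac{1}{t|J_t(y)|}\leq \frac{1}{t^2}$, and applying Cauchy--Schwarz pointwise in $x$ yields
\begin{equation*}
\left|\int_{\Gamma_+(x)} s\partial_s W_s^\alpha(f)(y)_{|s=t^2}\, s\partial_s W_s^\alpha(h)(y)_{|s=t^2}\,\frac{dt\,dy}{t|J_t(y)|}\right| \leq g_{\mathbb{W}^\alpha}^2(f)(x)\, g_{\mathbb{W}^\alpha}^2(h)(x).
\end{equation*}
H\"older's inequality and Proposition \ref{Laguerre} (in the case $q=2$) then bound this bilinear form by $C\|f\|_{L^p(0,\infty)}\|h\|_{L^{p'}(0,\infty)}$, establishing the required continuity.

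The main technical obstacle, though modest, is the justification of the initial Fubini interchange for general $L^p$--$L^{p'}$ inputs, since absolute convergence over the full product region is not immediate. This is sidestepped cleanly by proving the identity first on $\mathcal{D}$, where everything is smooth and absolutely convergent, and only afterwards invoking the bilinear continuity estimate above to pass to arbitrary $f\in L^p(0,\infty)$ and $h\in L^{p'}(0,\infty)$.
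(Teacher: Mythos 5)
Your proposal is correct and follows essentially the same route as the paper: both sides are shown to be continuous bilinear forms on $L^p(0,\infty)\times L^{p'}(0,\infty)$ (the right-hand side via $t|J_t(y)|\geq t^2$, Cauchy--Schwarz, H\"older and Proposition \ref{Laguerre} with $q=2$), and the identity is then verified on the dense span of $\{\varphi_n^\alpha\}$ by the spectral computation with Fubini and orthonormality. Your spectral bookkeeping (the factor $4\cdot\frac{1}{4\lambda^2}\cdot\lambda^2=1$) matches the paper's $8\cdot\frac{1}{8\lambda^2}\cdot\lambda^2=1$ after the change of variable $u=t^2$, so the constants check out.
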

\begin{proof}
Let us consider the bilinear mappings $T_1$ and $T_2$ defined on $L^p(0,\infty )\times L^{p'}(0,\infty )$ by
$$
T_1(f,h)=\int_0^\infty f(x)h(x)dx,
$$
and
$$
T_2(f,h)=8\int_0^\infty\int_{\Gamma _+(x)}s\frac{\partial }{\partial s}W_s^\alpha (f)(y)_{|s=t^2}
s\frac{\partial }{\partial s}W_s^\alpha (h)(y)_{|s=t^2}\frac{dtdy}{t|J_t(y)|}dx.
$$

It is clear, by using the H\"older inequality, that $T_1$ is a continuous bilinear functional. Also, since $t|J_t(y)|\geq t^2$, $t,y>0$, the H\"older inequality and Proposition \ref{Laguerre} lead to
\begin{eqnarray*}
|T_2(f,g)|&\leq&8\int_0^\infty\left(\int_{\Gamma _+(x)}\left|s\frac{\partial }{\partial s}W_s^\alpha (f)(y)_{|s=t^2}
\right|^2\frac{dtdy}{t|J_t(y)|}\right)^{1/2}\\
&\times&\left(\int_{\Gamma _+(x)}\left|s\frac{\partial }{\partial s}W_s^\alpha (h)(y)_{|s=t^2}\right|^2\frac{dtdy}{t|J_t(y)|}\right)^{1/2}dx\\
&\leq&8\int_0^\infty g_{\mathbb{W}^\alpha}^2(f)(x)g_{\mathbb{W}^\alpha }^2(h)(x)dx\\
&\leq&8||g_{\mathbb{W}^\alpha}^2(f)||_{L^p(0,\infty )}||g_{\mathbb{W}^\alpha }^2(h)||_{L^{p'}(0,\infty )}\leq C||f||_{L^p(0,\infty )}||h||_{L^{p'}(0,\infty )}.
\end{eqnarray*}

Then, by taking into account that $\mbox{span }\{\varphi _n^\alpha\}_{n=1}^\infty$ is dense in $L^r(0,\infty )$, $1<r<\infty$ (see \cite[Lemma 4.3]{No1}), we will finish the proof when we show that $T_1(f,h)=T_2(f,h)$, for $f,g\in
\mbox{span }\{\varphi _n^\alpha\}_{n=1}^\infty$.

Let $f=\sum_{n=0}^\infty a_n\varphi _n^\alpha$ and $h=\sum _{n=0}^\infty b_n\varphi _n^\alpha$, where $a_n, b_n\in \mathbb{R}$, $n\in \mathbb{N}$, and $a_n, b_n\not=0$, only for a finite number of $n$. We can write
\begin{eqnarray*}
T_2(f,h)&=&8\sum_{n,m=0}^\infty a_nb_m(2n+\alpha +1)(2m+\alpha +1)\int_0^\infty \int_{\Gamma _+(x)}t^3e^{-2(n+m+\alpha +1)t^2}\varphi _n^\alpha (y)\varphi _m^\alpha (y)\frac{dtdy}{|J_t(y)|}dx\\
&=&8\sum_{n,m=0}^\infty a_nb_m(2n+\alpha +1)(2m+\alpha +1)\int_0^\infty t^3e^{-2(n+m+\alpha +1)t^2}\int_0^\infty \frac{\varphi _n^\alpha (y)\varphi _m^\alpha (y)}{|J_t(y)|}\int_{J_t(y)}dxdydt\\
&=&8\sum_{n=0}^\infty a_nb_n(2n+\alpha +1)^2\int_0^\infty t^3e^{-2(2n+\alpha +1)t^2}dt\\
&=&\sum_{n=0}^\infty a_nb_n=T_1(f,h).
\end{eqnarray*}
Thus the proof is finished.
\end{proof}

Let $1<p<\infty$ and $f\in L^p(0,\infty )$. By using duality and Proposition \ref{polarizacion} we have
\begin{eqnarray*}
||f||_{L^p(0,\infty )}&=&\sup_{h\in L^{p'}(0,\infty ), ||h||_{L^{p'}(0,\infty )}\leq 1}\left|\int_0^\infty f(x)h(x)dx\right|\\
&=&8\sup_{h\in L^{p'}(0,\infty ), ||h||_{L^{p'}(0,\infty )}\leq 1}\left|\int_0^\infty\int_{\Gamma _+(x)}\left[s\frac{\partial }{\partial s}W_s^\alpha (f)(y)\right]_{|s=t^2}
\left[s\frac{\partial }{\partial s}W_s^\alpha (h)(y)\right]_{|s=t^2}\frac{dtdy}{t|J_t(y)|}dx\right|.
\end{eqnarray*}

By taking into account that $t|J_t(y)|\geq t^2$, $y,t>0$, the H\"older inequality leads to
\begin{eqnarray*}
||f||_{L^p(0,\infty )}
&\leq&8\sup_{h\in L^{p'}(0,\infty ), ||h||_{L^{p'}(0,\infty )}\leq 1}\int_0^\infty
g_{\mathbb{W}^\alpha }^q(f)(x)g_{\mathbb{W}^\alpha}^{q'}(h)(x)dx\\
&\leq&8\sup_{h\in L^{p'}(0,\infty ), ||h||_{L^{p'}(0,\infty )}\leq 1}
||g_{\mathbb{W}^\alpha }^q(f)||_{L^p(0,\infty )}||g_{\mathbb{W}^\alpha}^{q'}(h)||_{L^{p'}(0,\infty )}.
\end{eqnarray*}

Since $q'\geq 2$, Proposition \ref{Laguerre} allows us finally to conclude that
$$
||f||_{L^p(0,\infty )}\leq C||g_{\mathbb{W}^\alpha}^q(f)||_{L^p(0,\infty )}.
$$

\section{Proof of Proposition \ref{H1}}
Let $f\in L^1(0,\infty )$. We denote by $f_{\rm o}$ the odd extension of $f$ to $\mathbb{R}$. According to \cite[Remark 2.4]{BDG}, $f\in H^1_{L_\alpha }(0,\infty )$ if, and only if, $f_{\rm o}\in H^1_{\mathcal H}(\mathbb{R})$. Moreover, $f_{\rm o}\in H^1_{\mathcal H}(\mathbb{R})$ when, and only when, $g^2_{\mathbb{W}}(f_{\rm o})\in L^1(\mathbb{R})$. Our objective is to see that $g_{\mathbb{W}}^2(f_{\rm o})\in L^1(\mathbb{R})$ if and only if $g_{\mathbb{W}^\alpha }^2(f)\in L^1(0,\infty )$.



According to the comments at the beginning of the proof of Proposition \ref{Laguerre}, $g_{\mathbb{W}}^2(f_{\rm o})\in L^1(\mathbb{R})$ if, and only if, $g_{\mathbb{W}}^{2,+}(f)$, defined as in (\ref{gq+}) with $q=2$, belongs to $ L^1(0,\infty )$.

By using (\ref{Ws}) and the Minkowski inequality, we get
\begin{eqnarray}\label{A2}
\left|g_{\mathbb{W}}^{2,+}(f)(x)-\left\{\int_{\Gamma _+(x)}\left|\left(s\frac{\partial }{\partial s}\int_{\frac{x}{2}}^{2x}W_s(y,z)f(z)dz\right)_{|s=t^2}\right|^2\frac{dtdy}{t^2}\right\}^{1/2}\right|&&\nonumber\\
&\hspace{-18cm}\leq&\hspace{-9cm}\int_{(0,\infty )\setminus (\frac{x}{2},2x)}\left\{\int_{\Gamma _+(x)}
\left|\left(s\frac{\partial }{\partial s}(W_s(y,z)-W_s(y,-z))\right)_{|s=t^2}\right|^2\frac{dtdy}{t^2}\right\}^{1/2}|f(z)|dz\nonumber\\
&\hspace{-18cm}-&\hspace{-9cm}\int_{\frac{x}{2}}^{2x}\left\{\int_{\Gamma _+(x)}\left|\left(s\frac{\partial }{\partial s}W_s(y,-z)\right)_{|s=t^2}\right|^2\frac{dtdy}{t^2}\right\}^{1/2}|f(z)|dz\nonumber\\
&\hspace{-18cm}=&\hspace{-9cm}G_1(|f|)(x)-G_2(|f|)(x),\quad x\in (0,\infty ).
\end{eqnarray}

By taking into account (\ref{CZ1}) with $q=2$ we obtain
$$
G_2(|f|)(x)\leq C\int_{\frac{x}{2}}^{2x}\frac{|f(z)|}{x+z}dz,\quad x\in (0,\infty ),
$$
and, consequently, $G_2$ defines a bounded operator from $L^1(0,\infty )$ into itself. On the other hand, according again to (\ref{CZ1}) with $q=2$, we have that
$$
\left\{\int_{\Gamma _+(x)}
\left|\left(s\frac{\partial }{\partial s}(W_s(y,z)-W_s(y,-z))\right)_{|s=t^2}\right|^2\frac{dtdy}{t^2}\right\}^{1/2}\leq \frac{C}{z},\quad 0<2x<z,
$$
and by (\ref{CZ2a}), with $q=2$, we get
\begin{eqnarray*}
\left\{\int_{\Gamma _+(x)}
\left|\left(s\frac{\partial }{\partial s}(W_s(y,z)-W_s(y,-z))\right)_{|s=t^2}\right|^2\frac{dtdy}{t^2}\right\}^{1/2}&&\\
&\hspace{-16cm}=&\hspace{-8cm}\left\{\int_{\Gamma _+(x)}
\left|\left(s\int_{-z}^z\frac{\partial ^2 }{\partial u\partial s}W_s(y,u)du\right)_{|s=t^2}\right|^2\frac{dtdy}{t^2}\right\}^{1/2}\\
&\hspace{-16cm}\leq&\hspace{-8cm}\int_{-z}^z\left\{\int_{\Gamma _+(x)}
\left|\left(s\frac{\partial ^2 }{\partial u\partial s}W_s(y,u)du\right)_{|s=t^2}\right|^2\frac{dtdy}{t^2}\right\}^{1/2}du\\
&\hspace{-16cm}\leq&\hspace{-8cm}C\int_{-z}^z\frac{1}{(x-u)^2}du\leq C\frac{z}{x^2},\quad 0<z<\frac{x}{2}.
\end{eqnarray*}
Then,
$$
G_1(|f|)(x)\leq C\left(\int_{2x}^\infty \frac{|f(z)|}{z}dz+\frac{1}{x^2}\int_0^{\frac{x}{2}}z|f(z)|dz\right),
\quad x\in (0,\infty ),
$$
and thus, $G_1$ is a bounded operator from $L^1(0,\infty )$ into itself.

From (\ref{A2}) we deduce that $g_\mathbb{W}^{2,+}(f)\in L^1(0,\infty )$ if and only if
$$
\left\{\int_{\Gamma _+(x)}
\left|\left(s\frac{\partial}{\partial s}\int_{\frac{x}{2}}^{2x}W_s(y,z)f(z)dz\right)_{|s=t^2}\right|^2\frac{dtdy}{t^2}\right\}^{1/2}\in L^1(0,\infty ).
$$

Note now that
\begin{eqnarray}\label{A2.5}
\left|g_{\mathbb{W}^\alpha}^2(f)(x)-\left\{\int_{\Gamma _+(x)}
\left|\left(s\frac{\partial}{\partial s}\int_{\frac{x}{2}}^{2x}W_s(y,z)f(z)dz\right)_{|s=t^2}\right|^2\frac{dtdy}{t^2}\right\}^{1/2}\right|&&\nonumber\\
&\hspace{-18cm}\leq &\hspace{-9cm}\int_{(0,\infty ) \setminus (\frac{x}{2},2x)}\left\{\int_{\Gamma _+(x)}
\left|\left(s\frac{\partial}{\partial s}W_s^\alpha (y,z)\right)_{|s=t^2}\right|^2\frac{dtdy}{t^2}\right\}^{1/2}|f(z)|dz\nonumber\\
&\hspace{-18cm}+&\hspace{-9cm}\int_{\frac{x}{2}}^{2x}\left\{\int_{\Gamma _+(x)}
\left|\left(s\frac{\partial }{\partial s}(W_s^\alpha (y,z)-W_s(y,z))\right)_{|s=t^2}\right|^2\frac{dtdy}{t^2}\right\}^{1/2}|f(z)|dz\nonumber\\
&\hspace{-18cm}=&\hspace{-9cm}\Lambda _1(|f|)(x)+\Lambda _2(|f|)(x),\quad x\in (0,\infty ).
\end{eqnarray}
As it was showed in the proof of Proposition \ref{Laguerre}, the operator $\Lambda _2$ is bounded from $L^1(0,\infty )$ into itself. (Note that $\Lambda _2(|f|)=A_3(|f|)$, with $q=2$).
On the other hand, from (\ref{AcotN}) for $q=2$, we have that
\begin{equation}\label{3.6q=2}
\left\{\int_{\Gamma _+(x)}
\left|\left(s\frac{\partial}{\partial s}W_s^\alpha (y,z)\right)_{|s=t^2}\right|^2\frac{dtdy}{t^2}\right\}^{1/2}\leq
\frac{C}{z}, \quad 0<2x<z.
\end{equation}
Moreover, let $L(z)$ and $R(z)$, $z\in (0,\infty )$, be as in (\ref{L}) and (\ref{R}), respectively, and consider
$$
N_1(x,z)=\left\{\int_{\Gamma _+(x)\cap L(z)}
\left|\left(s\frac{\partial }{\partial s}W_s^\alpha (y,z)\right)_{|s=t^2}\right|^2\frac{dtdy}{t^2}\right\}^{1/2}, \quad x,z\in (0,\infty ),
$$
and
$$
N_2(x,z)=\left\{\int_{\Gamma _+(x)\cap R(z)}
\left|\left(s\frac{\partial }{\partial s}W_s^\alpha (y,z)\right)_{|s=t^2}\right|^2\frac{dtdy}{t^2}\right\}^{1/2}, \quad x,z\in (0,\infty ).
$$
By (\ref{N1}), we get
\begin{equation}\label{A3}
N_1(x,z)\leq C\frac{z^{\alpha +1/2}}{x^{\alpha +3/2}},\quad 0<z<\frac{x}{2}.
\end{equation}

Also, by proceeding as in the proof of (\ref{N2}), we can obtain
\begin{eqnarray}\label{A4}
N_2(x,z)&\leq&C\left\{\int_{\Gamma _+(x)\cap R(z)} e^{-\frac{(y-z)^2}{2t^2}}\frac{e^{-3t^2}t^2}{(1-e^{-2t^2})^5}(y^2+z^2)^2dtdy\right\}^{1/2}\nonumber\\
&\leq &C\left\{\int_0^\infty \int_{|x-y|}^\infty e^{-\frac{(y-z)^2}{2t^2}}\frac{e^{-3t^2}t^2}{(1-e^{-2t^2})^5}\left(\frac{yze^{-t^2}}{1-e^{-2t^2}}\right)^{2\alpha +1}(y^2+z^2)^2dtdy\right\}^{1/2}\nonumber\\
&\leq &Cz^{\alpha +1/2}\left\{\int_0^\infty \int_{|x-y|}^\infty e^{-\frac{(y-z)^2}{2t^2}}\frac{e^{-(2\alpha +4)t^2}t^2}{(1-e^{-2t^2})^{2\alpha +6}}(y+z)^4y^{2\alpha +1}dtdy\right\}^{1/2}\nonumber\\
&\leq &Cz^{\alpha +1/2}\left\{\int_0^\infty \int_{|x-y|}^\infty \frac{(y+z)^4y^{2\alpha +1}}{(t^2+|y-z|^2)^{2\alpha +5}}dtdy\right\}^{1/2}\nonumber\\
&\leq &C\frac{z^{\alpha +1/2}x^{\alpha +5/2}}{|x-z|^{2\alpha +4}}\leq C\frac{z^{\alpha +1/2}}{x^{\alpha +3/2}},\quad 0<z<\frac{x}{2}.
\end{eqnarray}

From (\ref{3.6q=2}), (\ref{A3}) and (\ref{A4}) we deduce that
$$
\Lambda _1(|f|)(x)\leq C\left(\int_{2x}^\infty \frac{|f(z)|}{z}dz+\frac{1}{x^{\alpha +3/2}}\int_0^{x/2}z^{\alpha +1/2}|f(z)|dz\right),\quad x\in (0,\infty ).
$$
Hence, $\Lambda _1$ defines a bounded operator from $L^1(0,\infty )$ into itself.

We infer from (\ref{A2.5}) that $g_{\mathbb{W}^\alpha }^2(f)\in L^1(0,\infty )$ if and only if
$$
\left\{\int_{\Gamma _+(x)}
\left|\left(s\frac{\partial }{\partial s}\int_{\frac{x}{2}}^{2x}W_s(y,z)f(z)dz\right)_{|s=t^2}\right|^2\frac{dtdy}{t^2}\right\}^{1/2}\in L^1(0,\infty ).
$$
Thus, we conclude that $f\in H_{L_\alpha }^1(0,\infty )$ if and only if $g_{\mathbb{W}^\alpha }^2(f)\in L^1 (0,\infty )$. Moreover, the above estimations prove that the quantities $\|f\|_{H^1_{L_\alpha }(0,\infty )}$ and $\|f\|_{L^1(0,\infty )}+\|g_{\mathbb{W}^\alpha }^2(f)\|_{L^1(0,\infty )}$ are equivalent.



\end{document}